        \theoremstyle{plain}
        \newtheorem{theorem}{Theorem}[section]
        \newtheorem{corollary}[theorem]{Corollary}
        \newtheorem{lemma}[theorem]{Lemma}
        \newtheorem{proposition}[theorem]{Proposition}
        \newtheorem{maintheorem}{Theorem}
        \theoremstyle{definition}
        \newtheorem{definition}[theorem]{Definition}
        \newtheorem{example}[theorem]{Example}
        \newtheorem*{example*}{Example}
        \theoremstyle{remark}
        \newtheorem{remark}[theorem]{Remark}
        \newtheorem*{remark*}{Remark}  
        \newcommand{\suchthat}{\,:\,}
        \newcommand{\itemref}[1]{\eqref{#1}}
        \newcommand{\loccit}{[\emph{loc.\ cit.}]\xspace}
        \newcommand{\mathscript}{\mathcal}
        \newcommand{\Z}{\mathbb{Z}}
        \newcommand{\Q}{\mathbb{Q}}
        \newcommand{\Orb}{\mathcal{O}}   
        \newcommand{\red}[1]{{#1}_{\mathrm{red}}} 
        \DeclareMathOperator{\im}{im} \DeclareMathOperator{\Hom}{Hom}
        \DeclareMathOperator{\supp}{supp}
        \newcommand{\COHO}[1]{\mathcal{H}^{{#1}}}
        \newcommand{\RDERF}{\mathsf{R}}
        \newcommand{\LDERF}{\mathsf{L}}
        \newcommand{\DCAT}{\mathsf{D}}
        \newcommand{\SHom}{\mathcal{H}om}
        \DeclareMathOperator{\supph}{supph} 
        \newcommand{\QCOH}{\mathsf{QCoh}}
        \newcommand{\PERF}{\mathsf{Perf}}
        \newcommand{\ID}[1]{\mathrm{Id}_{#1}}
        \newcommand{\tensor}{\otimes}
        \newcommand{\homotopic}{\simeq}
        \newcommand{\opp}{\circ}
\renewcommand{\subset}{\subseteq}
\numberwithin{equation}{section}
\newcommand{\qcsubscript}{\mathrm{qc}} 
\newcommand{\DQCOH}[1][]{\DCAT_{\qcsubscript{#1}}} 
\newcommand{\QCPSH}[1]{(#1_{\qcsubscript})_*} 
\newcommand{\QCPBK}[1]{#1^*_{\qcsubscript}}
\newcommand{\shv}[1]{\mathcal{#1}} 
\newcommand{\cplx}[1]{\shv{#1}} 
\newcommand{\labitem}[2]{%
\def\@itemlabel{(\textbf{#1})}
\item
\def\@currentlabel{\textbf{#1}}\label{#2}}
\setlist[enumerate]{font=\normalfont}
\DeclareMathOperator{\Tho}{Tho}
\DeclareMathOperator{\stab}{stab}
\newcommand{\smashing}{\mathbb{S}}
\newcommand{\thick}{\mathbb{T}}
\newcommand{\infl}{\mathsf{I}}
\newcommand{\cont}{\mathsf{C}}
\newcommand{\rig}{\mathrm{rig}}
\newcommand{\SPB}{\mathrm{Sp}^{\mathrm{Bal}}}
\newcommand{\equalizer}[3]{\xymatrix{{#1}\ar[r] & {#2}\ar@<.5ex>@{->}[r] \ar@<-.5ex>@{->}[r] & {#3}}}
\newcommand{\coequalizer}[3]{\xymatrix{{#1}\ar@<.5ex>@{->}[r] \ar@<-.5ex>@{->}[r] & {#2}\ar[r] & {#3}}}
\title{The telescope conjecture for algebraic stacks}
\date{May 15, 2017}
\author[J. Hall]{Jack Hall}
\address{Department of Mathematics\\University of Arizona\\Tucson, AZ 85721\\USA}
\email{jackhall@math.arizona.edu}
\author[D. Rydh]{David Rydh}
\address{KTH Royal Institute of Technology\\Department of Mathematics\\SE\nobreakdash-100\ 44\ Stockholm\\Sweden}
\email{dary@math.kth.se}
\thanks{The first author was supported by the Australian Research Council DE150101799 while some of this work was completed.}
\thanks{The second author is supported by the Swedish Research Council
2011-5599 and 2015-05554.}
\subjclass[2010]{Primary 14F05, 18E30; secondary 14A20}
\keywords{Derived categories, algebraic stacks}
\begin{document}
\begin{abstract}
  Using Balmer--Favi's generalized idempotents, we establish the telescope conjecture for many algebraic stacks. Along the way, we classify the thick tensor ideals of perfect complexes of stacks.
\end{abstract}
\maketitle
\section*{Introduction}
Let $\mathscript{T}$ be a triangulated category. A basic question is the classification of 
the triangulated subcategories of $\mathscript{T}$.  The interest in this question stemmed from the pioneering work of 
Hopkins \cite{MR932260} who connected it to the longstanding telescope conjecture in 
algebraic topology. Over the last two decades, spurred by work of Neeman \cite{MR1174255}, there 
has been interest in this from the perspective of algebraic geometry. Most recently, this has been 
considered by Antieau \cite{MR3161100}, Stevenson \cite{MR3181496}, 
and Balmer--Favi \cite{MR2806103}. We will now briefly recall some positive results in this direction that have been proved by others before we get to the contributions of this article.

Let $X$ be a quasi-compact and quasi-separated scheme. Consider $\PERF(X)$, the triangulated 
category of perfect complexes on $X$. If $P$ is a perfect complex on $X$, then let 
$\supph(P) \subseteq |X|$ be its cohomological support. If $E \subseteq |X|$ is a subset, then define
\[
\mathcal{I}(E) = \bigl\{P \in \PERF(X) \suchthat \supph(P) \subseteq E \bigr\}.
\]
This is a thick tensor ideal of $\PERF(X)$, that is, $\mathcal{I}(E)$ is a full triangulated subcategory of $\PERF(X)$ which is closed under direct summands and by tensoring with perfect complexes. Conversely, given a thick tensor ideal $\mathscript{C}$ of $\PERF(X)$ we obtain a subset
\[
\varphi(\mathscript{C}) = \bigcup_{P \in \mathscript{C}} \supph(P) \subseteq |X|.
\]
The cohomological support of a perfect complex is closed with quasi-compact
complement. Thus, $\varphi(\mathscript{C})$ is a \emph{Thomason} subset---a
union of closed subsets with quasi-compact complements.

Let $\thick(X)$ denote the collection of thick tensor ideals of $\PERF(X)$ and let $\Tho(X)$ 
denote the set of Thomason subsets of $|X|$. Both $\thick(X)$ and 
$\Tho(X)$ are partially ordered via inclusion, so we have described order
preserving maps:
\[
\xymatrix{\Tho(X)  \ar@<0.5ex>[r]^-{\smash{\mathcal{I}}} & \thick(X). \ar@<0.5ex>[l]^-{\varphi} }
\]
These maps are mutually inverse by an important result of
Thomason \cite{MR1436741}. This generalizes earlier work of Hopkins and Neeman.

Now consider $\DQCOH(X)$, the unbounded derived category of quasi-coherent sheaves on 
$X$. A natural class of subcategories of $\DQCOH(X)$ to consider are the \emph{smashing 
tensor ideals}, that is, those smashing Bousfield localizations $\mathscript{S} \subseteq 
\DQCOH(X)$ that are closed under tensor product by objects of $\DQCOH(X)$  (see 
\S\ref{S:tensor_ideals} for details). Let $\smashing(X)$ denote the collection of all smashing 
tensor ideals of $\DQCOH(X)$. It is well-known that there is an 
\emph{inflation} map:
\[
\infl \colon \thick(X) \to \smashing(X),
\]
which sends a thick tensor ideal $\mathscript{C}$ to the smallest localizing subcategory in $\DQCOH(X)$ containing $\mathscript{C}$ (see Corollary \ref{C:tensor_inflation} for details). There is also a \emph{contraction} map:
\[
\cont \colon \smashing(X) \to \thick(X),
\]
which sends a smashing tensor ideal $\mathscript{S}$ to the thick tensor ideal 
$\mathscript{S} \cap \PERF(X)$ of $\PERF(X)$. Thus, we have described order preserving maps:
\[
\xymatrix{\thick(X)  \ar@<0.5ex>[r]^-{\smash{\infl}} & \smashing(X) \ar@<0.5ex>[l]^-{\cont}.}
\]
A beautiful result of Balmer--Favi \cite[Cor.~6.8]{MR2806103}, generalizing the work of 
Neeman, is that these maps are mutually inverse when $X$ is noetherian. Such results have typically been referred to as resolutions of the \emph{tensor telescope conjecture}. We recommend \cite[\S1]{MR3161100} for an excellent discussion of the history of this conjecture as well as a number of positive results in this direction.

It is also natural to consider all of this in the equivariant setting. Let $G$
be an algebraic group acting on a quasi-projective variety $V$. It is then
natural to consider the category $\PERF^G(V)$ of perfect complexes of
$G$-equivariant quasi-coherent sheaves on $V$ and the unbounded derived
category $\DCAT(\QCOH^G(V))$ of $G$-equivariant quasi-coherent sheaves.

The first main result of this article characterizes thick tensor ideals of
$\PERF^G(V)$ and smashing tensor ideals of $\DCAT(\QCOH^G(V))$ in terms of
$G$-invariant subsets of $|V|$.
\begin{maintheorem}\label{main:equivariant_char0}
  Let $V$ be a quasi-projective variety over a field $k$. Let $G$ be an 
  affine algebraic group acting on $V$.
  Then there are mutually inverse maps
  \[
  \xymatrix{\Tho(\mbox{$G$-orbits on $V$})  \ar@<0.5ex>[r]^-{\smash{\mathcal{I}}} & \thick(\PERF^G(V)) 
    \ar@<0.5ex>[l]^-{\varphi} \ar@<0.5ex>[r]^-{\smash{\infl}} & \smashing(\DCAT(\QCOH^G(V))) 
    \ar@<0.5ex>[l]^-{\cont} }
\]
if either
  \begin{enumerate}
  \item the characteristic of $k$ is zero, and $V$ is semi-normal (e.g., normal or smooth) or the action of $G$ on $V$ is linearizable; or
  \item $G^0$ is of multiplicative type (e.g., a torus) and the geometric component 
    group of $G$ has order prime to the characteristic of $k$.
  \end{enumerate}
\end{maintheorem}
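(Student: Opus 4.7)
The plan is to translate Theorem \ref{main:equivariant_char0} into a statement about the quotient stack $\stk{X} := [V/G]$. Under the standard identification $\PERF^G(V) \simeq \PERF(\stk{X})$ and $\DCAT(\QCOH^G(V)) \simeq \DQCOH(\stk{X})$, while $G$-orbits on $|V|$ correspond to points of $|\stk{X}|$, so $G$-stable Thomason subsets of $|V|$ match Thomason subsets of $|\stk{X}|$. The conclusion thus becomes the assertion that the canonical maps $\mathcal{I}, \varphi, \infl, \cont$ between $\Tho(|\stk{X}|)$, $\thick(\PERF(\stk{X}))$, and $\smashing(\DQCOH(\stk{X}))$ are mutually inverse.

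My first step would be to invoke a general telescope-type theorem for algebraic stacks (to be proved in the body of this paper): if $\stk{X}$ is a noetherian algebraic stack that is concentrated, whose $\DQCOH(\stk{X})$ admits a compact generator lying in $\PERF(\stk{X})$, and which enjoys the Thomason generation property (every Thomason closed subset of $|\stk{X}|$ is the cohomological support of some perfect complex), then both bijections follow. The left-hand bijection is the stacky analogue of Thomason's reconstruction; the right-hand bijection uses Balmer--Favi's generalized idempotents as advertised in the abstract. The entire problem thus reduces to verifying these hypotheses for $\stk{X} = [V/G]$ under each of (1) and (2).

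Under hypothesis (2), the assumption that $G^0$ is of multiplicative type with component group of order prime to $\mathrm{char}(k)$ makes $BG$ cohomologically tame: representations of $G^0$ decompose along characters, and averaging over the component group is permissible and exact. This yields concentrated-ness of $[V/G]$ and a compact generator obtained by inducing from a compact generator of $V$ and decomposing along characters. Under hypothesis (1), the reductive quotient of $G$ falls into an analogous representation-theoretic framework, while the unipotent radical contributes nothing cohomologically in characteristic zero; the semi-normality (resp. linearizability) assumption lets one apply a Sumihiro-type equivariant linearization embedding $V$ into the projectivisation of a $G$-representation, which supplies both concentrated-ness and the desired compact generators via twists of the tautological bundle.

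The main obstacle is the Thomason generation condition: producing an equivariant perfect complex with prescribed cohomological support. This requires compatible equivariant Koszul-type resolutions on an atlas, whose existence fails for arbitrary singular $V$ or arbitrary positive-characteristic $G$. The semi-normality/linearizability hypotheses in (1) and the multiplicative-type/prime-to-$\mathrm{char}(k)$ hypotheses in (2) are precisely what ensure that such resolutions exist locally and can be patched into a global equivariant perfect complex; this assembly step is where the distinction between the two cases really matters and where the bulk of the work will lie.
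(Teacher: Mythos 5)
Your high-level strategy matches the paper exactly: pass to the quotient stack $X=[V/G]$, recognize that $G$-orbits on $V$ correspond to points of $|X|$, and invoke a general stack-theoretic telescope theorem (Theorem~\ref{main:thick_stack}) once the hypotheses are verified. The difference is in how the hypotheses are handled. The paper does not re-derive concentratedness, compact generation or the support-generation property; it simply cites prior work: $[V/G]$ satisfies the Thomason condition by \cite[Cor.~9.2]{perfect_complexes_stacks} (which subsumes all of your Sumihiro/linearization/Koszul considerations), and $[V/G]$ is concentrated because $BG$ is, by \cite[Thm.~B]{hallj_dary_alg_groups_classifying}. Your sketches of these facts are directionally correct but considerably vaguer than what would be required, and they duplicate work that the paper intentionally offloads to references.

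Two points to tighten. First, you state that the identifications $\PERF^G(V)\simeq\PERF([V/G])$ and $\DCAT(\QCOH^G(V))\simeq\DQCOH([V/G])$ are ``standard,'' but the second is not automatic: the derived category of the abelian category of equivariant sheaves need not agree with the lisse-\'etale derived category $\DQCOH([V/G])$. The paper uses compact generation (i.e.\ the first half of the Thomason condition) to invoke \cite[Thm.~1.2]{hallj_neeman_dary_no_compacts} and obtain this equivalence, so it is part of the logical structure, not a formality. Second, your phrasing ``admits a compact generator lying in $\PERF(X)$'' is backwards: compacts are always perfect; the content of concentratedness is the converse, that every perfect complex is compact, which is what is needed so that $\thick(\PERF^G(V))$, rather than merely $\thick(\DQCOH(X)^c)$, appears in the conclusion. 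With those two clarifications, your argument is the paper's argument.
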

When $V$ is affine and $G$ is diagonalizable, Theorem~\ref{main:equivariant_char0} was proved by Dell'Ambrogio and Stevenson~\cite{MR2995031} and they also
classified all localizing tensor ideals. As far as we know,
all other prior results
\cite{MR2927050,MR2570954,hall_balmer_cms} only establishes parts of Theorem \ref{main:equivariant_char0} in the case of finite stabilizers or when the action is trivial \cite{MR3161100}.

Algebraic stacks form the natural setting for Theorem \ref{main:equivariant_char0}. In fact, Theorem \ref{main:equivariant_char0} is a consequence of a much more general and conceptual result, which we will now describe.

Let $X$ be a quasi-compact and quasi-separated algebraic stack. We let $\DQCOH(X)$ denote its unbounded derived category of quasi-coherent
sheaves and let $\DQCOH(X)^c$ denote the thick subcategory of compact objects
of $\DQCOH(X)$~\cite{perfect_complexes_stacks}. If $X$ is a scheme or algebraic
space, then there is an equality $\DQCOH(X)^c = \PERF(X)$. In general,
there is always an inclusion $\DQCOH(X)^c \subseteq \PERF(X)$ but it can be
strict~\cite[Thm.~C]{hallj_dary_alg_groups_classifying}.  When $V$ and $G$ are
as in Theorem~\ref{main:equivariant_char0} and $X=[V/G]$ is the stack quotient,
then $\DQCOH(X)^c = \PERF^G(V)$ and
$\DQCOH(X)=\DCAT(\QCOH^G(V))$~\cite{hallj_neeman_dary_no_compacts}.

Let $\thick(X)$ denote the collection of thick tensor
ideals of $\DQCOH(X)^c$ and let $\smashing(X)$ denote the collection of
smashing tensor ideals of $\DQCOH(X)$.
\begin{maintheorem}\label{main:thick_stack}
  Let $X$ be a quasi-compact and quasi-separated algebraic stack. If $\DQCOH(X)$
  is
  compactly generated, then there are maps:
  \[
  \xymatrix{\Tho(X)  \ar@<0.5ex>[r]^-{\smash{\mathcal{I}_X}} & \thick(X) 
\ar@<0.5ex>[l]^-{\varphi_X} \ar@<0.5ex>[r]^-{\smash{\infl_X}} & \smashing(X) 
\ar@<0.5ex>[l]^-{\cont_X}. }
  \]
  such that $\cont_X\circ \infl_X=\ID{\thick(X)}$.
  If $X$ satisfies the Thomason condition, then $\mathcal{I}_X$ and $\varphi_X$ are 
  mutually inverse. If in addition $X$ is noetherian, then $\infl_X$ and $\cont_X$ are 
  mutually inverse. 
\end{maintheorem}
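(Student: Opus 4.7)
The plan is to construct the four maps and then dispatch the three assertions in order of increasing strength of hypothesis. The maps $\mathcal{I}_X$, $\varphi_X$, and $\cont_X$ are defined exactly as in the scheme case reviewed in the introduction and are visibly order-preserving. The first preliminary point is that $\infl_X(\mathscript{C})$ is genuinely a smashing tensor ideal for any thick tensor ideal $\mathscript{C} \subseteq \DQCOH(X)^c$: this uses the compact generation hypothesis together with the fact that $\DQCOH(X)^c$ is closed under the derived tensor product (so that inflation stays tensor-ideal, and the Bousfield localization at $\infl_X(\mathscript{C})$ remains smashing).

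For the identity $\cont_X \circ \infl_X = \ID_{\thick(X)}$, I would invoke the general theorem (essentially due to Neeman) valid in any compactly generated triangulated category $\mathscript{T}$: if $\mathscript{C} \subseteq \mathscript{T}^c$ is a thick subcategory, then the localizing subcategory of $\mathscript{T}$ generated by $\mathscript{C}$ meets $\mathscript{T}^c$ in exactly $\mathscript{C}$. Verifying that the tensor ideal structure is preserved under inflation and contraction is then routine using a compact tensor generator.

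Under the Thomason condition I would establish the bijection $\mathcal{I}_X \leftrightarrow \varphi_X$ by imitating Thomason's classical argument. The Thomason hypothesis supplies, for every Thomason subset $E \subseteq |X|$, a compact object with support $E$, which immediately yields $\varphi_X \circ \mathcal{I}_X = \ID$. For $\mathcal{I}_X \circ \varphi_X = \ID$, one takes a compact $P$ with $\supph(P) \subseteq \bigcup_{Q \in \mathscript{C}} \supph(Q)$; quasi-compactness of $\supph(P)$ reduces to a finite subcover $\{\supph(Q_i)\}$, after which a standard Koszul-type construction using iterated tensors $P \ltensor Q_i$ and the cones of the counits $Q_i \to \mathcal{O}_X$ builds $P$ as an iterated extension of objects of $\mathscript{C}$.

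Finally, assume in addition that $X$ is noetherian, so that the Thomason bijection $\mathcal{I}_X \leftrightarrow \varphi_X$ is already available. To obtain $\infl_X \circ \cont_X = \ID$, the plan is to deploy the Balmer--Favi generalized idempotents advertised in the abstract: for each point $x \in |X|$ one constructs a tensor-idempotent object $e_x \in \DQCOH(X)$ supported on $\overline{\{x\}}$, whose tensor action detects membership in any smashing tensor ideal. A smashing tensor ideal $\mathscript{S}$ is then determined by its support $\{x \in |X| : e_x \otimes \mathscript{S} \neq 0\}$, which is Thomason; combined with the $\mathcal{I}_X \leftrightarrow \varphi_X$ bijection this forces $\mathscript{S} = \infl_X(\cont_X(\mathscript{S}))$. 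I expect the main obstacle to be the construction and analysis of these idempotents in the stack setting, since this is where the noetherian hypothesis is genuinely essential: it is what supplies the local structure (quasi-compact complements of specialization-closed subsets, and well-behaved pro-systems of Koszul objects) that underpins the Balmer--Favi idempotent formalism.
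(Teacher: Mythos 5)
Your construction of the maps and the verification of $\cont_X \circ \infl_X = \ID$ is essentially the paper's Corollary~\ref{C:tensor_inflation} and Theorem~\ref{T:inflation}, except for one caveat: your final sentence invokes ``a compact tensor generator,'' but the unit $\Orb_X$ is compact only when $X$ is concentrated, which is not assumed here. The paper's Corollary~\ref{C:tensor_inflation} carefully sidesteps this by applying Thomason's Localization Theorem twice (once with a compact object of $\mathscript{C}$ fixed, once with an arbitrary object of $\DQCOH(X)$ fixed) and never needs the unit to be compact.

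For the two bijections, your plan diverges from the paper in a way that creates real gaps. For $\mathcal{I}_X \circ \varphi_X = \ID$, the ``Koszul-type construction using cones of the counits $Q_i \to \Orb_X$'' is not available: a general perfect complex $Q_i$ carries no canonical map to $\Orb_X$. What the Hopkins--Neeman--Thomason argument actually requires is a tensor nilpotence theorem with parameters, and that machinery has only been established for schemes and for stacks with quasi-finite separated diagonal. The paper explicitly presents its approach as an \emph{alternative} to tensor nilpotence: it observes that ``$X$ tensor supported'' descends along a smooth affine cover $f\colon X' \to X$, using only that $\LDERF\QCPBK{f}$ is conservative and preserves compacts so that $\thick((\LDERF f^*)^c)$ is injective (Corollary~\ref{C:thick_injective}, which is itself a cheap consequence of Balmer--Favi idempotents). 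This reduces the question to the affine case, settled by Neeman. Similarly, for $\infl_X \circ \cont_X = \ID$, your plan to build Rickard-type idempotents $e_x$ indexed by the points of $|X|$ and read off $\mathscript{S}$ from its Balmer--Favi support would in effect require redeveloping the whole of \cite[\S\S5--6]{MR2806103} for stacks, including a theory of residue objects at stacky points. The paper avoids all of this: the noetherian hypothesis is used only to invoke Hopkins--Neeman on an affine presentation $X'$, and the result is transported back to $X$ by Theorem~\ref{T:smashing_local}, a diagram chase whose inputs are the fppf descent of Thomason subsets (Lemma~\ref{L:ff-descent-of-Thomason}), the injectivity of $\smashing(\LDERF \QCPBK{f})$ from conservativity (Theorem~\ref{T:idempotent_injective}), and the already-established $\mathcal{I}_X \leftrightarrow \varphi_X$ bijection. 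Your plan is not unreasonable as a research program, but as written it leaves the two hardest steps --- tensor nilpotence for general stacks and a pointwise idempotent theory on $|X|$ --- unaddressed, whereas the paper's descent strategy is precisely designed to make those unnecessary.
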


An algebraic stack $X$ satisfies the \emph{Thomason condition} if
\begin{enumerate}
\item the unbounded derived category of quasi-coherent sheaves
  $\DQCOH(X)$ is compactly generated, and 
\item for every Zariski-closed subset $Z \subseteq |X|$ with quasi-compact complement, there exists
  a perfect complex $P$ on $X$ with $\supph(P) = Z$. 
\end{enumerate}
If $X$ is a quasi-compact and quasi-separated algebraic stack, then $X$ is known to satisfy the Thomason condition in the following situations:
\begin{enumerate}[label=(\alph*)]
\item $X$ is a scheme \cite[Thm.~3.1.1]{MR1996800}; or
\item $X$ has quasi-finite and separated diagonal (e.g., algebraic spaces and Deligne--Mumford stacks) \cite[Thm.~A]{perfect_complexes_stacks}; or
\item \label{main:thick_stack:quotients} $X=[V/G]$, where $V$ is a scheme over a characteristic $0$ field $k$, $G$ is 
  an affine algebraic group acting on $V$ and
  \begin{enumerate}[label=(\alph{enumi}\arabic*)]
  \item $V$ is quasi-affine \cite[Prop.~8.4]{perfect_complexes_stacks}; or
  \item $V$ is quasi-projective over $k$ and the action of $G$ is linearizable \cite[Ex.~7.5 \& Prop.~8.4]{perfect_complexes_stacks}; or
  \item $V$ is quasi-projective over $k$ and semi-normal \cite[Cor.~to Thm.~B]{perfect_complexes_stacks}; or
  \item $V$ is of finite type over $k$ and normal \loccit;
  \end{enumerate}
\item $X$ admits a separated \'etale cover by a stack as above \cite[Thm.~B]{perfect_complexes_stacks}; or
\item $X$ is of finite type over a field with affine diagonal and
  $\stab(x)^0$ is linearly reductive for every closed point $x\in |X|$ (e.g.,
  $X$ admits a good moduli space)~\cite[Thm.~2.26]{AHR_lunafield}.
\end{enumerate}
It is also known that many gerbes satisfy the Thomason condition~\cite[Thm.~3.6]{hallj_dary_alg_groups_classifying}. In particular, we view the Thomason condition as a very mild condition for stacks in characteristic zero and for stacks with linearly reductive stabilizers in positive characteristic.

The equivalence on the left in Theorem \ref{main:thick_stack} was proved for stacks with 
quasi-finite diagonal by the first author in \cite[Thm.~1.1]{hall_balmer_cms} using a completely 
different approach (i.e., tensor nilpotence). The equivalence on the right in Theorem \ref{main:thick_stack} was proved for Deligne--Mumford stacks with locally constant stabilizers admitting coarse moduli schemes by Antieau \cite{MR3161100}. In particular, Theorem \ref{main:thick_stack} generalizes all previously known results, and answers some questions posed by Antieau \cite[Qstn.~6.3 \& 6.8]{MR3161100}.

Recall that an algebraic stack $X$ is \emph{concentrated} if it is quasi-compact, 
quasi-separated and $\Orb_X$ is a compact object of $\DQCOH(X)$, or equivalently, 
$\DQCOH(X)^c = \PERF(X)$ (Lemma \ref{L:rig_compact}). Part of Theorem 
\ref{main:thick_stack}, for concentrated stacks, can be rephrased in terms of the Balmer spectrum \cite{MR2196732}. 
\begin{maintheorem}\label{main:balmer_concentrated}
  Let $X$ be a concentrated algebraic stack. If $X$ satisfies the Thomason condition, 
  then there is a natural isomorphism of locally ringed spaces:
    \[
  (|X|,\Orb_{X_{\mathrm{Zar}}}) \to \SPB(\PERF(X)),
  \]
  where $\Orb_{X_{\mathrm{Zar}}}$ is the Zariski sheaf $U\mapsto \Gamma(U,\Orb_X)$.
\end{maintheorem}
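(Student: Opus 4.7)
The plan is to combine the bijection $\varphi_X \colon \thick(X) \xrightarrow{\sim} \Tho(X)$ from Theorem~\ref{main:thick_stack} with a direct computation of Balmer's prime spectrum and structure sheaf, and then check that the result matches $|X|$ equipped with its Zariski sheaf of rings.

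First, for the underlying set and topology: a proper thick tensor ideal $\mathscript{C} \subsetneq \PERF(X)$ is a prime tt-ideal iff $P \otimes Q \in \mathscript{C}$ implies $P \in \mathscript{C}$ or $Q \in \mathscript{C}$ for all $P, Q \in \PERF(X)$. Using $\supph(P \otimes Q) = \supph(P) \cap \supph(Q)$ together with the Thomason condition (every closed subset of $|X|$ with quasi-compact complement is $\supph(P)$ for some perfect $P$), this translates into an irreducibility condition on $\varphi_X(\mathscript{C})$: if $Z_1, Z_2 \subseteq |X|$ are closed with quasi-compact complement and $Z_1 \cap Z_2 \subseteq \varphi_X(\mathscript{C})$, then some $Z_i \subseteq \varphi_X(\mathscript{C})$. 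Because $|X|$ is a sober spectral space with quasi-compact opens as a basis, this condition singles out a unique point $x \in |X|$, yielding a bijection $x \leftrightarrow \mathcal{P}_x$ between $|X|$ and $\SPB(\PERF(X))$. Under this bijection, Balmer's support $\supp(P) = \{\mathcal{P} : P \notin \mathcal{P}\}$ is carried to $\supph(P) \subseteq |X|$. Since quasi-compact opens form a basis of the Zariski topology on $|X|$, the topology generated by these closed subsets coincides with the Zariski topology, giving a homeomorphism $|X| \cong \SPB(\PERF(X))$.

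Second, for the structure sheaf: Balmer's sheaf of rings on $\SPB(\PERF(X))$ is defined on basic opens $U = \SPB(\PERF(X)) \setminus \supp(P)$ by
\[
\Gamma(U, \Orb_{\mathrm{Bal}}) = \End_{\PERF(X)/\PERFSUPP{\supph(P)}(X)}(\Orb_X).
\]
The corresponding quasi-compact open $V = |X| \setminus \supph(P)$ is the underlying space of a quasi-compact open immersion $j \colon X_V \hookrightarrow X$. Open immersions preserve the concentrated property, so $X_V$ is again concentrated, and the Thomason–Neeman localization theorem for concentrated algebraic stacks (available under our hypotheses from~\cite{perfect_complexes_stacks}) gives an equivalence
\[
\PERF(X)/\PERFSUPP{\supph(P)}(X) \xrightarrow{\sim} \PERF(X_V).
\]
Passing to endomorphisms of the unit identifies $\Gamma(U, \Orb_{\mathrm{Bal}})$ with $\Gamma(X_V, \Orb_{X_V}) = \Gamma(V, \Orb_{X_{\mathrm{Zar}}})$, functorially in inclusions of quasi-compact opens. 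Sheafifying on the basis of quasi-compact opens produces the desired isomorphism of sheaves of rings, and stalks recover the Zariski local rings, upgrading the homeomorphism to an isomorphism of locally ringed spaces.

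The main obstacle is the localization equivalence $\PERF(X)/\PERFSUPP{\supph(P)}(X) \xrightarrow{\sim} \PERF(X_V)$, the Thomason–Neeman theorem for concentrated stacks satisfying the Thomason condition. Once this is in hand, the remainder is a dictionary translation through Theorem~\ref{main:thick_stack}: sobriety of $|X|$ delivers the bijection on underlying sets, and the basis property of quasi-compact opens ensures that the topologies and structure sheaves match.
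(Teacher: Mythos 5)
Your proposal is correct and follows the same route the paper takes: the paper's proof is a one-line citation to \cite[Thm.~1.2]{hall_balmer_cms}, which, in turn, runs Balmer's comparison argument---classify primes via the support bijection $\Tho(X) \cong \thick(X)$, use sobriety of the spectral space $|X|$ to identify points, and then compare structure sheaves on the basis of quasi-compact opens via the Thomason--Neeman localization $\PERF(X)/\PERFSUPP{\supph(P)}(X) \simeq \PERF(X_V)$ from \cite{perfect_complexes_stacks}. You have essentially reconstructed the content of that referenced proof, feeding in Theorem~\ref{T:classification_thick} exactly as the paper indicates.
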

When $X$ has finite stabilizers, Theorem \ref{main:balmer_concentrated} was first proven by the first author~\cite[Thm.~1.2]{hall_balmer_cms}. Recall that quasi-compact algebraic stacks with quasi-finite and separated diagonal satisfy the Thomason condition \cite[Thm.~A]{perfect_complexes_stacks}. Such stacks are concentrated precisely when they are \emph{tame}, e.g., of characteristic zero.

To prove Theorem \ref{main:thick_stack} we had two key insights: the first was that
Balmer--Favi's generalized idempotents \cite{MR2806103} can be used to prove some general injectivity 
results (see \S\ref{S:injectivity}) and the second was that faithfully flat descent of Thomason 
subsets holds (Lemma~\ref{L:ff-descent-of-Thomason}). Combining these two results, we 
prove that the statement of Theorem \ref{main:thick_stack} is local for faithfully flat 
morphisms of finite presentation. We thus reduce the problem to the affine situation, which was treated by Neeman in \cite{MR1174255}.

\subsection*{Assumptions and conventions}
For algebraic stacks, we follow the conventions of \cite{stacks-project}. In particular, 
an algebraic stack is not a priori assumed to have any separation properties. All stacks 
appearing in the article, however, will at least be quasi-separated (i.e., the diagonal 
and second diagonal are quasi-compact). An algebraic stack is  \emph{noetherian} if it 
is quasi-separated and admits a faithfully flat cover by a noetherian scheme. 
\subsection*{Acknowledgements}
We wish to thank Greg Stevenson, Ben Antieau, and Amnon Neeman for their comments and suggestions on the article. We also wish to thank the anonymous referee for their suggestions and pointers to the literature.
\section{Subcategories of triangulated categories}
In this section, we briefly review some well-known results on subcategories of triangulated categories. The key definition being that of a \emph{smashing localization} (see Theorem 
\ref{T:smashing_loc} and \cite[Prop.~5.5.1]{MR2681709}). We found the results and expositions contained in \cite[\S2 and \S9]{MR1812507}, \cite{MR2681709}, and \cite[\S2]{MR2806103} particularly helpful.

Throughout this section we let $\mathscript{T}$ be a triangulated category. Let $\mathscript{S} \subseteq 
\mathscript{T}$ be a triangulated subcategory. By that we will mean the following:
\begin{enumerate}
\item $\mathscript{S}$ is a full subcategory of $\mathscript{T}$; 
\item $\mathscript{S}$ contains $0$; and
\item if $s_1 \to s_2 \to s_3 \to s_1[1]$ is a distinguished triangle in
  $\mathscript{T}$ and two of the $s_i$ belong to $\mathscript{S}$,
  then so does the third.
\end{enumerate}
These conditions imply that $\mathscript{S}$ is a triangulated category and the resulting functor $\mathscript{S} \to \mathscript{T}$ is triangulated. Moreover, these conditions also imply that $\mathscript{S}$ is \emph{replete}, that is, if $s \in \mathscript{S}$ and $t\in \mathscript{T}$ and $s\simeq t$, then $t\in \mathscript{S}$. 
\subsection{Thick and localizing subcategories}
We say that $\mathscript{S}$ is \emph{thick} if 
every $\mathscript{T}$-direct summand of $s\in \mathscript{S}$ belongs to 
$\mathscript{S}$. If $\mathscript{T}$ admits small coproducts, then we say that $\mathscript{S}$ is \emph{localizing} if it also admits small coproducts and the inclusion of $\mathscript{S}$ into $\mathscript{T}$ preserves small coproducts. An Eilenberg swindle argument shows that every localizing subcategory is thick. 

If $f \colon \mathscript{T}
\to \mathscript{T}'$ is a triangulated functor, then $\ker f$ is a thick
subcategory. Conversely, if $\mathscript{S}\subseteq \mathscript{T}$ is a thick
subcategory, then there is a \emph{Verdier} quotient $q \colon \mathscript{T}\to
\mathscript{T}/\mathscript{S}$ and $\mathscript{S}=\ker q$. The quotient need
not have small Hom-sets. Note that if $\mathscript{S}$ is a localizing subcategory of $\mathscript{T}$, then the Verdier quotient $q \colon \mathscript{T}\to
\mathscript{T}/\mathscript{S}$ preserves small coproducts.

The following notation will be useful: if $\mathscript{C} \subseteq \mathscript{T}$ is a 
class, let $\langle \mathscript{C} \rangle$ denote the smallest localizing subcategory containing $\mathscript{C}$ (see \cite[\S 3.2]{MR1812507} for details). We record for future reference the following lemma.
\begin{lemma}\label{L:image_preimage_localizing}
  Let $f\colon \mathscript{T} \to \mathscript{T}'$ be a triangulated functor. Assume 
  $\mathscript{T}$, $\mathscript{T}'$ admit small coproducts and that $f$ preserves 
  them.
  \begin{enumerate}
  \item \label{LI:image_preimage_localizing:preimage} If $\mathscript{S}'$ is a localizing subcategory of $\mathscript{T}'$, then the full 
    subcategory $f^{-1}(\mathscript{S}')$ of $\mathscript{T}$ is a localizing subcategory.
  \item \label{LI:image_preimage_localizing:image} If $\mathscript{C} \subseteq \mathscript{T}$ is a class, then
    $\langle f(\mathscript{C})\rangle = \langle f(\langle
    \mathscript{C} \rangle) \rangle$.
  \end{enumerate}
\end{lemma}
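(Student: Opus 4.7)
The plan is to verify (1) directly from the definitions, and then to deduce (2) by a standard "minimality" argument using part (1).

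For \itemref{LI:image_preimage_localizing:preimage}, I would check the three conditions defining a localizing subcategory for $f^{-1}(\mathscript{S}')$. Fullness is immediate from the definition (one takes the full subcategory on objects $t$ with $f(t) \in \mathscript{S}'$). Since $f$ is triangulated, $f(0) \cong 0 \in \mathscript{S}'$, so $0$ belongs to $f^{-1}(\mathscript{S}')$. Given a distinguished triangle $s_1 \to s_2 \to s_3 \to s_1[1]$ in $\mathscript{T}$ with two of the $s_i$ in $f^{-1}(\mathscript{S}')$, applying $f$ yields a distinguished triangle in $\mathscript{T}'$ with two terms in $\mathscript{S}'$; thickness of $\mathscript{S}'$ places the third in $\mathscript{S}'$, so $f^{-1}(\mathscript{S}')$ is triangulated. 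Finally, given a small family $\{t_i\}$ in $f^{-1}(\mathscript{S}')$, its coproduct $\coprod t_i$ exists in $\mathscript{T}$, and since $f$ preserves coproducts and $\mathscript{S}'$ is localizing, $f(\coprod t_i) \cong \coprod f(t_i) \in \mathscript{S}'$. So $\coprod t_i \in f^{-1}(\mathscript{S}')$ and its inclusion into $\mathscript{T}$ preserves coproducts by construction. This yields the claim.

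For \itemref{LI:image_preimage_localizing:image}, the inclusion $\langle f(\mathscript{C})\rangle \subseteq \langle f(\langle \mathscript{C}\rangle)\rangle$ is automatic from $\mathscript{C} \subseteq \langle \mathscript{C}\rangle$ and the monotonicity of $\langle - \rangle$. For the reverse inclusion, I would apply part \itemref{LI:image_preimage_localizing:preimage} to the localizing subcategory $\mathscript{S}' = \langle f(\mathscript{C})\rangle$ of $\mathscript{T}'$, giving that $f^{-1}(\langle f(\mathscript{C})\rangle)$ is a localizing subcategory of $\mathscript{T}$. It obviously contains $\mathscript{C}$, so by minimality of $\langle \mathscript{C}\rangle$ it also contains $\langle \mathscript{C}\rangle$. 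Unwinding, this says $f(\langle \mathscript{C}\rangle) \subseteq \langle f(\mathscript{C})\rangle$, and taking $\langle - \rangle$ again yields $\langle f(\langle \mathscript{C}\rangle)\rangle \subseteq \langle f(\mathscript{C})\rangle$.

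There is no genuine obstacle here: the only point that requires any thought is to remember to invoke that $f$ preserves coproducts in the coproduct step of part \itemref{LI:image_preimage_localizing:preimage}, without which $f(\coprod t_i)$ need not lie in $\mathscript{S}'$. Once part \itemref{LI:image_preimage_localizing:preimage} is in hand, part \itemref{LI:image_preimage_localizing:image} is the textbook "preimage of the generated subcategory contains the generators" argument.
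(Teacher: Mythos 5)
Your argument is correct and is essentially identical to the paper's: part (1) is verified directly, and part (2) follows by applying (1) to $\langle f(\mathscript{C})\rangle$ and using minimality. One tiny terminological slip: in the two-out-of-three step you invoke ``thickness'' of $\mathscript{S}'$, but what you are really using is that $\mathscript{S}'$ is a triangulated subcategory (thickness in this paper means closure under direct summands); since a localizing subcategory is in particular triangulated, the argument goes through unchanged.
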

\begin{proof}
    For \itemref{LI:image_preimage_localizing:preimage}, we first show that
    the subcategory $f^{-1}(\mathscript{S}')$ is triangulated.
    Let $s' \to s \to s'' \to s'[1]$ be a triangle in $\mathscript{T}$ such that 
    $f(s')$ and $f(s) \in \mathscript{S}'$. Hence, $f(s') \to f(s) \to f(s'') \to f(s')[1]$ is a 
    distinguished triangle in $\mathscript{T}'$. Since $\mathscript{S}'$ is a triangulated 
    subcategory of $\mathscript{T}'$, it follows that $f(s'') \in \mathscript{S}'$. That 
    $f^{-1}(\mathscript{S}')$ is closed under small coproducts follows immediately from 
    the assumption that $f$ preserves small coproducts. Hence, $f^{-1}(\mathscript{S}')$ is 
    a localizing subcategory.

    For \itemref{LI:image_preimage_localizing:image}: since $f(\mathscript{C}) \subseteq 
    f(\langle \mathscript{C} \rangle)$, it follows 
    immediately that $\langle f(\mathscript{C}) \rangle \subseteq \langle f(\langle 
    \mathscript{C} \rangle) \rangle$. For the reverse inclusion, by 
    \itemref{LI:image_preimage_localizing:preimage}, $f^{-1}(\langle f(\mathscript{C}) 
    \rangle)$ is a localizing subcategory of $\mathscript{T}$. Moreover, it contains 
    $\mathscript{C}$, so it also contains $\langle \mathscript{C} \rangle$. Hence, $f(\langle \mathscript{C} \rangle) \subseteq \langle f(\mathscript{C}) 
    \rangle$ as required.
\end{proof}

\subsection{Compact generation}
We say that $t\in \mathscript{T}$ is \emph{compact} if, for every set of objects $\{ x_i \}_{i\in I}$ in $\mathscript{T}$, the natural map
\[
\bigoplus_{i\in I}\Hom(t,x_i) \to \Hom\Bigl(t,\bigoplus_{i\in I} x_i\Bigr)
\]
is an isomorphism. We let $\mathscript{T}^c$ denote the full subcategory of compact objects of $\mathscript{T}$. It is easily determined that it is a thick subcategory.

We say that $\mathscript{T}$ is \emph{compactly generated} if there is a set $T$ of 
compact objects in $\mathscript{T}$ such that if $x\in \mathscript{T}$ and $\Hom(t[n],x) = 
0$ for all $t\in {T}$ and $n \in \Z$, then $x=0$. We call $T$ a \emph{generating} set for $\mathscript{T}$.

\emph{Brown representability} 
\cite[Thm.~4.1]{MR1308405} says that for a compactly generated triangulated category $\mathscript{T}$ a triangulated functor $F \colon \mathscript{T} \to \mathscript{T}'$ preserves small coproducts if and only if $F$ admits a right adjoint. 

An important and useful result here is \emph{Thomason's Localization Theorem} 
\cite[Thm.~2.1]{MR1308405}, which we now state. Assume $\mathscript{T}$ is compactly 
generated and $S$ is a set of compact objects of 
$\mathscript{T}$. If $\mathscript{S} = \langle S \rangle$, then
\begin{enumerate}
\item $\mathscript{S}$ is compactly generated as a triangulated category by $S$;
\item if $S$ generates $\mathscript{T}$, then $\mathscript{S} = \mathscript{T}$;
\item $\mathscript{S} \cap \mathscript{T}^c = \mathscript{S}^c$ and is the smallest thick subcategory of $\mathscript{T}^c$ containing $S$; and
\item the quotient $\mathscript{T}^c/\mathscript{S}^c \to (\mathscript{T}/\mathscript{S})^c$ is fully faithful with dense image.
\end{enumerate}
Note 
that by ``dense image'' we mean that the thick closure of 
$\mathscript{T}^c/\mathscript{S}^c$ in $(\mathscript{T}/\mathscript{S})^c$ is an equivalence. 

There are more powerful results along these lines using the theory of 
\emph{well-generated} triangulated categories \cite{MR1812507}. We will occasionally mention some results in this level of generality, though for the purposes of this article it is safe to substitute this with ``compactly generated''.

\subsection{Bousfield localization}
A Verdier quotient $q\colon \mathscript{T} \to \mathscript{T}/\mathscript{S}$ is a \emph{Bousfield localization} if $q$ admits a right adjoint $s$.

If $q$ admits a right adjoint $s$, then
$s$ is fully faithful, $\mathscript{S}$ is localizing,
and the quotient has small Hom-sets. If $\mathscript{T}$
is well-generated, then Brown representability implies that $q$ admits a right adjoint if and only if
$\mathscript{S}$ is localizing and the quotient
has small Hom-sets \cite[Ex.\ 8.4.5]{MR1812507}. Conversely, given a triangulated functor
$f \colon \mathscript{T} \to \mathscript{T}'$ that admits a fully faithful
right adjoint, the kernel $\ker f$ is localizing and there is an equivalence of
categories $\mathscript{T}/\ker f\to \mathscript{T}'$. 

The following example summarizes some results from \cite[\S2]{perfect_complexes_stacks} that we will use in this article.
\begin{example}\label{E:stack_pbk_psh}
Let $j\colon U\to X$ be a quasi-compact, quasi-separated and representable (more 
generally, concentrated) morphism of algebraic stacks. Then $\LDERF \QCPBK{j} \colon 
\DQCOH(X) \to \DQCOH(U)$ admits a right adjoint $\RDERF \QCPSH{j} \colon \DQCOH(U) \to \DQCOH(X)$ that preserves 
small coproducts. If in addition $j$ is a flat monomorphism, then
$\RDERF \QCPSH{j}$ is fully faithful. Indeed, the counit $\LDERF \QCPBK{j}\RDERF \QCPSH{j}\to
\ID{U}$ is an isomorphism by flat base change (since $U\times_X
U=U$). Thus, $\LDERF \QCPBK{j} \colon \DQCOH(X) \to \DQCOH(U)$ is a Bousfield
localization.
\end{example}

A related notion is that of a \emph{Bousfield localization functor}, which is a pair 
$(L,\lambda)$ where $L \colon \mathscript{T} \to \mathscript{T}$ is an exact functor and 
$\lambda \colon \ID{\mathscript{T}} \to L$ is a natural transformation such that $L\lambda\colon L\to L^2$ is an 
isomorphism and $L\lambda = \lambda L$.

The \emph{$L$-local} objects are those belonging to the image of $L$; note that
$t\in \mathscript{T}$ is $L$-local if and only if $\lambda_t$ is an
isomorphism. The \emph{$L$-acyclic} objects are those in the kernel of $L$. In
particular, there are no maps from $L$-acyclic to $L$-local objects; this also
characterizes $L$-acyclic and $L$-local objects in terms of the other
collection~\cite[Prop.~4.10.1]{MR2681709}.

The Bousfield localization functors on $\mathscript{T}$ naturally form a category. A \emph{Bousfield colocalization functor} $(\Gamma,\gamma)$ is a Bousfield localization functor on $\mathscript{T}^{\opp}$. 

If $\mathscript{C}$ is a collection of objects of $\mathscript{T}$, then let 
\begin{align*}
  \mathscript{C}^{\perp} = \{ t\in \mathscript{T} \suchthat \Hom(c,t) = 0, \;\forall c\in \mathscript{C} \}.
\end{align*}
We call $\mathscript{C}^{\perp}$ the \emph{right orthogonal} of $\mathscript{C}$ in 
$\mathscript{T}$. The \emph{left orthogonal}, ${}^{\perp}\mathscript{C}$, of 
$\mathscript{C}$ in $\mathscript{T}$ is the right orthogonal of $\mathscript{C}$ in 
$\mathscript{T}^{\opp}$. Here we follow the conventions of \cite[\S 4.8]{MR2681709} 
and \cite[Def.~2.5]{MR2806103}, which is the opposite to \cite[9.1.10/11]{MR1812507}.

The following well-known theorem (see \cite[4.9--12]{MR2681709}) ties these notions together. 
\begin{theorem}[Bousfield localization]\label{T:bousfield_local}
Let $\mathscript{T}$ be a triangulated category and let $\mathscript{S}\subseteq
\mathscript{T}$ be a thick subcategory. The following are equivalent:
\begin{enumerate}
\item The Verdier quotient $q\colon \mathscript{T}\to
  \mathscript{T}/\mathscript{S}$ has a right adjoint.
\item Each $t\in\mathscript{T}$ fits in an exact triangle
\[
t'\to t\to t''\to t'[1]
\]
with $t'\in \mathscript{S}$ and $t''\in \mathscript{S}^{\perp}$.
\item There exists a Bousfield localization functor $(L,\lambda)$ with $\mathscript{S} = \ker(L)$.
\item There exists a Bousfield colocalization functor $(\Gamma,\gamma)$ with $\mathscript{S} = \im(\Gamma)$.
\end{enumerate}
Under these equivalent conditions, the subcategory
$\mathscript{S}=\ker(L)=\im(\Gamma)$
is localizing,
the subcategory $\mathscript{S}^\perp=\ker(\Gamma)=\im(L)$ is colocalizing
(i.e., closed under products),
the triangle is unique,
$t'=\Gamma(t)$, and $t''=L(t)$. In particular, the triangle is functorial. We
denote this triangle by
\[
\Delta_{\mathscript{S}}(t):=\bigl(\Gamma(t)\to t\to L(t)\to \Gamma(t)[1]\bigr).
\]
\end{theorem}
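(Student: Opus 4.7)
The plan is to prove the standard cycle of implications $(1) \Rightarrow (3) \Rightarrow (2) \Rightarrow (1)$, derive $(4)$ by applying the established equivalence in the opposite category, and then extract the structural claims formally.

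For $(1) \Rightarrow (3)$, I would let $s$ be a right adjoint to the Verdier quotient $q$, set $L := s \circ q$, and take $\lambda \colon \ID{\mathscript{T}} \to L$ to be the unit of adjunction. The central point is that $s$ is fully faithful, equivalently, that the counit $qs \to \ID{\mathscript{T}/\mathscript{S}}$ is an isomorphism; this follows from the fact that $q$ is essentially surjective and identifies morphisms modulo $\mathscript{S}$-quasi-isomorphisms. The triangle identities then yield $L\lambda = \lambda L$ and show that $L\lambda$ is an isomorphism, while $\ker L = \ker q = \mathscript{S}$.

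For $(3) \Rightarrow (2)$, given $(L, \lambda)$ I would complete $\lambda_t \colon t \to L(t)$ to a distinguished triangle
\[
t' \to t \to L(t) \to t'[1].
\]
Applying the exact functor $L$ and using that $L\lambda_t$ is an isomorphism yields $L(t') = 0$, so $t' \in \ker L = \mathscript{S}$. To check $L(t) \in \mathscript{S}^\perp$, I would show that any morphism $s \to L(t)$ with $s \in \mathscript{S}$ must vanish: applying $L$ annihilates it since $L(s) = 0$, and the isomorphism $\lambda_{L(t)} \colon L(t) \to L^2(t)$ then forces the original morphism to be zero.

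For $(2) \Rightarrow (1)$, the main task is to show that the composite $\mathscript{S}^\perp \hookrightarrow \mathscript{T} \to \mathscript{T}/\mathscript{S}$ is an equivalence; the right adjoint to $q$ is then obtained by composing a quasi-inverse with $\mathscript{S}^\perp \hookrightarrow \mathscript{T}$. Essential surjectivity is immediate from the triangle decomposition. Fully faithfulness is the delicate part: given $t_1, t_2 \in \mathscript{S}^\perp$, any morphism $q(t_1) \to q(t_2)$ is represented by a roof $t_1 \leftarrow w \to t_2$ whose denominator $w \to t_1$ has cone $c \in \mathscript{S}$, and the vanishings $\Hom(c, t_2) = \Hom(c[-1], t_2) = 0$ rectify this roof to a unique genuine morphism $t_1 \to t_2$. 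Condition $(4)$ is incorporated by applying the already-established chain to $\mathscript{T}^{\opp}$, in which Bousfield colocalization functors and left orthogonals correspond to Bousfield localization functors and right orthogonals.

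The remaining structural claims then follow formally: $\mathscript{S}^\perp$ is closed under products because each $\Hom(s, -)$ preserves them; uniqueness and functoriality of $\Delta_{\mathscript{S}}(t)$ follow from the vanishing $\Hom(s, t'') = 0$ for $s \in \mathscript{S}$ and $t'' \in \mathscript{S}^\perp$, applied to morphisms of triangles; and the identifications $t' = \Gamma(t)$ and $t'' = L(t)$ are then forced by this uniqueness together with the constructions. I expect the main obstacle to be the fully faithful step in $(2) \Rightarrow (1)$, which requires careful manipulation of the calculus of fractions defining $\mathscript{T}/\mathscript{S}$ to upgrade the bare triangle decomposition into an adjoint pair.
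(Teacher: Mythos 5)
The paper gives no proof of this result---it is cited as well-known from Krause's lecture notes---so your proposal is a reconstruction, and its cycle $(1)\Rightarrow(3)\Rightarrow(2)\Rightarrow(1)$ is the standard one and is essentially correct. Each implication is argued appropriately: full faithfulness of the right adjoint to a Verdier quotient, completing $\lambda_t$ to a triangle and using $\ker L = \mathscript{S}$, and rectifying roofs with cones in $\mathscript{S}$ against objects of $\mathscript{S}^\perp$. The structural claims at the end (products, uniqueness, functoriality, and the identifications $t'=\Gamma(t)$, $t''=L(t)$) do follow from $\Hom(\mathscript{S},\mathscript{S}^\perp)=0$ as you indicate.

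The one genuine gap is your treatment of condition $(4)$. Applying the established chain to $\mathscript{T}^{\opp}$ with the subcategory $\mathscript{S}^{\opp}$ does \emph{not} give $(4)$: condition $(1)$ dualizes to ``$q$ admits a \emph{left} adjoint,'' condition $(2)$ dualizes to a decomposition involving the left orthogonal ${}^\perp\mathscript{S}$ rather than $\mathscript{S}^\perp$, and condition $(3)$ dualizes to a colocalization functor with \emph{kernel} $\mathscript{S}$, whereas $(4)$ asks for $\im(\Gamma)=\mathscript{S}$. Your mention of ``left orthogonals'' is a symptom of this: left orthogonals of $\mathscript{S}$ never appear in the theorem. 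To make the dualization work, you must apply the chain to $(\mathscript{T}^{\opp},(\mathscript{S}^\perp)^{\opp})$, and to verify its hypothesis $(2)$ there you need the identity $\mathscript{S} = {}^\perp(\mathscript{S}^\perp)$, which is itself a consequence of the decomposition $(2)$ and thickness of $\mathscript{S}$ (decompose $t\in {}^\perp(\mathscript{S}^\perp)$ as $t'\to t\to t''$; the map $t\to t''$ vanishes, so $t$ is a retract of $t'\in\mathscript{S}$). The cleaner alternative is to bypass dualization: given $(L,\lambda)$, set $\Gamma(t)$ to be the fiber of $\lambda_t$, make this functorial via the uniqueness you already proved, and check directly that $(\Gamma,\gamma)$ is a colocalization functor with $\im(\Gamma)=\mathscript{S}$ (the inclusion $\subseteq$ is immediate from $L\Gamma=0$, and if $t\in\mathscript{S}$ then $L(t)=0$ so $\gamma_t$ is an isomorphism).
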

A subcategory $\mathscript{S}$ as in the theorem is called a \emph{Bousfield
subcategory}. A Bousfield subcategory is localizing and the converse holds if
either (1) $\mathscript{S}$ is well-generated or (2) $\mathscript{T}$ is
well-generated and $\mathscript{T}/\mathscript{S}$ has small
Hom-sets~\cite[5.2.1]{MR2681709} or~\cite[Ex.~8.4.5 and
  Prop.~9.1.19]{MR1812507}. If $\mathscript{T}$ is well-generated,
then $\mathscript{S}$ is well-generated if $\mathscript{S}$ has a set of
generators~\cite[Lem.~A.1]{hallj_neeman_dary_no_compacts},
e.g., if $\mathscript{S}$ is the localizing envelope of a set of elements.

If $\mathscript{S}\subseteq \mathscript{S}'$ is an inclusion of Bousfield
subcategories of $\mathscript{T}$, then there is a unique natural
transformation of triangles
$(\epsilon,\ID{},\varphi)\colon \Delta_{\mathscript{S}}\to
\Delta_{\mathscript{S}'}$~\cite[4.11.2]{MR2681709}, \cite[Rmk.~2.9]{MR2806103}.
\begin{example}\label{E:stack_loc_col}
We continue Example \ref{E:stack_pbk_psh}. The localization functor is $L=\RDERF 
\QCPSH{j}\LDERF \QCPBK{j}$ with
the unit $\lambda\colon \ID{X}\to \RDERF 
\QCPSH{j}\LDERF \QCPBK{j}$. The colocalization $\Gamma$ is local cohomology with respect to $X\setminus U$.
\end{example}
\subsection{Smashing localization}
An important special case of Bousfield localizations are 
\emph{smashing} localizations. The following  
well-known theorem (see \cite[Prop.~5.5.1]{MR2681709}), ties several formulations of this condition together.
\begin{theorem}[Smashing localization]\label{T:smashing_loc}
Let $\mathscript{T}$ be a triangulated category and $\mathscript{S}\subseteq
\mathscript{T}$ be a Bousfield subcategory. The following conditions are equivalent:
\begin{enumerate}
\item $L$ preserves small coproducts.
\item $\Gamma$ preserves small coproducts.
\item The right adjoint of $q\colon \mathscript{T}\to \mathscript{T}/\mathscript{S}$ preserves small coproducts.
\item The subcategory $\mathscript{S}^\perp$ is localizing.
\end{enumerate}
\end{theorem}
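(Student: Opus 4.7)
The plan is to deduce all four equivalences from the uniqueness of the Bousfield triangle $\Delta_{\mathscript{S}}(t)$ furnished by Theorem~\ref{T:bousfield_local}. The central observation is that for any family $\{t_i\}_{i\in I}$ of objects of $\mathscript{T}$, the coproduct of the Bousfield triangles $\Delta_{\mathscript{S}}(t_i)$ is a distinguished triangle
\[
\bigoplus_i \Gamma(t_i) \to \bigoplus_i t_i \to \bigoplus_i L(t_i) \to \bigoplus_i \Gamma(t_i)[1]
\]
whose first term lies in $\mathscript{S}$ automatically, because $\mathscript{S}$ is localizing.

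I would begin with (1)~$\Leftrightarrow$~(4). The implication (1)~$\Rightarrow$~(4) is immediate: for $\{y_i\}\subseteq \mathscript{S}^\perp=\im(L)$ one has $L(\bigoplus_i y_i)=\bigoplus_i L(y_i)=\bigoplus_i y_i$, so $\bigoplus_i y_i$ is $L$-local. Conversely, assuming (4), the third term of the displayed triangle lies in $\mathscript{S}^\perp$, so by the uniqueness clause of Theorem~\ref{T:bousfield_local} the displayed triangle is canonically isomorphic to $\Delta_{\mathscript{S}}(\bigoplus_i t_i)$, yielding $L(\bigoplus_i t_i)\cong \bigoplus_i L(t_i)$. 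The equivalence (1)~$\Leftrightarrow$~(2) follows in the same spirit: both the displayed triangle and $\Delta_{\mathscript{S}}(\bigoplus_i t_i)$ have middle term $\bigoplus_i t_i$, and either hypothesis forces one pair of outer terms to match; applying the TR3 axiom to the two triangles then forces the remaining outer terms to match as well.

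For (1)~$\Leftrightarrow$~(3), let $s$ denote the right adjoint to $q$, which is fully faithful; then $L=s\circ q$. Since $\mathscript{S}$ is localizing, the Verdier quotient $q$ preserves small coproducts, so (3)~$\Rightarrow$~(1) is immediate. For (1)~$\Rightarrow$~(3), given a family $\{x_i\}$ in $\mathscript{T}/\mathscript{S}$, consider the canonical map $\alpha\colon \bigoplus_i s(x_i)\to s(\bigoplus_i x_i)$. Its target lies in $\mathscript{S}^\perp=\im(s)$, and by (1)~$\Leftrightarrow$~(4), already established, the source is also in $\mathscript{S}^\perp$. Applying $q$ to $\alpha$ produces the identity of $\bigoplus_i x_i$, using that $qs$ is naturally isomorphic to the identity and that $q$ preserves coproducts; thus the cone of $\alpha$ lies in $\ker(q)=\mathscript{S}$. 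Being also in the triangulated subcategory $\mathscript{S}^\perp$, the cone lies in $\mathscript{S}\cap\mathscript{S}^\perp=0$, and $\alpha$ is an isomorphism.

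The main obstacle will be invoking the correct form of uniqueness from Theorem~\ref{T:bousfield_local}: one must argue that any distinguished triangle with first term in $\mathscript{S}$ and third term in $\mathscript{S}^\perp$ is canonically the Bousfield triangle of its middle term (essentially the content of item~(2) of Theorem~\ref{T:bousfield_local}, together with the absence of nonzero morphisms from $\mathscript{S}$ to $\mathscript{S}^\perp$). Once that is granted, every implication reduces to formal bookkeeping about adjunctions, coproducts, and the identity $L=s\circ q$.
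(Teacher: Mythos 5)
The paper itself does not give a proof of this statement; it is quoted as well-known with a citation to Krause (\cite[Prop.~5.5.1]{MR2681709}), so there is no in-text argument to compare against. Your self-contained proof is correct and follows the standard strategy found in the cited reference: coproducts of distinguished triangles are distinguished, the coproduct of the Bousfield triangles $\Delta_{\mathscript{S}}(t_i)$ automatically has first term in $\mathscript{S}$, and the uniqueness part of Theorem~\ref{T:bousfield_local} together with the identity $L = s\circ q$ and the fact that $q$ preserves coproducts drives all four implications. The argument that the cone of $\alpha\colon\bigoplus_i s(x_i)\to s(\bigoplus_i x_i)$ lies in $\mathscript{S}\cap\mathscript{S}^\perp=0$ is a clean way to obtain (1)$\Rightarrow$(3).

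One presentational imprecision: in the (1)$\Leftrightarrow$(2) step, the appeal to ``TR3'' is not quite what is needed. TR3 merely asserts the existence of \emph{some} map completing a partial morphism of triangles, whereas what you use is that the canonical comparison maps $\bigoplus_i\Gamma(t_i)\to\Gamma(\bigoplus_i t_i)$ and $\bigoplus_i L(t_i)\to L(\bigoplus_i t_i)$ already assemble, together with the identity on $\bigoplus_i t_i$, into a genuine morphism of triangles --- this is the functoriality of $\Delta_{\mathscript{S}}$ asserted in Theorem~\ref{T:bousfield_local}. Once that is in place, the triangulated five lemma (two out of three maps in a morphism of triangles being isomorphisms forces the third to be one) gives the equivalence. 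This is a matter of citing the right principle rather than a gap in the reasoning.
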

We say that $\mathscript{S}$ is \emph{smashing} when the equivalent conditions
above are satisfied.
Example \ref{E:stack_loc_col} is a smashing localization.
\subsection{Inflation}
In this subsection, we assume that $\mathscript{T}$ is a compactly generated triangulated category.
\begin{example}
  If $X$ is a quasi-compact and quasi-separated scheme, algebraic space or algebraic 
  stack with quasi-finite diagonal, then $\DQCOH(X)$ is a compactly generated 
  triangulated category. If $X$ is a $\Q$-stack that \'etale-locally is a quotient stack, 
  then $\DQCOH(X)$ is also compactly generated. Every compact object of 
  $\DQCOH(X)$ is a perfect complex, and for schemes, algebraic spaces, and 
  $\Q$-stacks with affine stabilizers the converse holds. This is all discussed in detail in 
  \cite{perfect_complexes_stacks}.
\end{example}
A natural way to produce smashing subcategories in compactly generated triangulated categories is through the process of \emph{inflation}. The following result is a simple consequence of Thomason's Localization Theorem. 
\begin{theorem}\label{T:inflation}
Let $\mathscript{T}$ be a compactly generated triangulated
category.
Let $\mathscript{C}$ be a thick subcategory of $\mathscript{T}^c$. Then
\begin{enumerate}
\item $\langle\mathscript{C}\rangle$ is a smashing subcategory of
  $\mathscript{T}$.
\item $\mathscript{C}=\langle\mathscript{C}\rangle^c
  =\langle\mathscript{C}\rangle\cap \mathscript{T}^c$.
\item $\mathscript{T}^c/\mathscript{C}\to (\mathscript{T}/\mathscript{C})^c$
  is fully faithful with dense image.
\item $\mathscript{T}/\langle\mathscript{C}\rangle$ is compactly generated.
\end{enumerate}
\end{theorem}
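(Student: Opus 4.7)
The plan is to reduce everything to a single application of Thomason's Localization Theorem stated earlier in the excerpt. Since $\mathscript{T}$ is compactly generated, $\mathscript{T}^c$ is essentially small; in particular the thick subcategory $\mathscript{C}$ of $\mathscript{T}^c$ has a (small) skeleton, which I will denote by $S$. Then $\mathscript{S} := \langle \mathscript{C} \rangle = \langle S \rangle$ is the smallest localizing subcategory of $\mathscript{T}$ containing $S$, so Thomason's Localization Theorem applies to the set of compact objects $S$.

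First I dispose of (2) and (3), which are almost immediate from Thomason. Thomason's theorem tells us that $\mathscript{S} \cap \mathscript{T}^c = \mathscript{S}^c$ is the smallest thick subcategory of $\mathscript{T}^c$ containing $S$; since $\mathscript{C}$ is already thick and $S \subseteq \mathscript{C}\subseteq \mathscript{S} \cap \mathscript{T}^c$, this smallest thick subcategory is exactly $\mathscript{C}$, giving (2). Similarly, the dense image assertion of Thomason's theorem says that $\mathscript{T}^c/\mathscript{S}^c \to (\mathscript{T}/\mathscript{S})^c$ is fully faithful with dense image, which by (2) is exactly (3).

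Next I tackle (1), smashing. By Theorem~\ref{T:smashing_loc} it suffices to verify that $\mathscript{S}^{\perp}$ is closed under small coproducts, or equivalently that $\mathscript{S}$ is a Bousfield subcategory whose right orthogonal is localizing. Because $\mathscript{S} = \langle \mathscript{C} \rangle$, an object $x\in\mathscript{T}$ lies in $\mathscript{S}^{\perp}$ if and only if $\Hom(c[n],x) = 0$ for every $c \in \mathscript{C}$ and $n\in \Z$ (the collection of objects satisfying this vanishing is easily seen to be a localizing subcategory of $\mathscript{T}^{\opp}$). Since each such $c$ is compact in $\mathscript{T}$, the functor $\Hom(c[n],-)$ commutes with small coproducts, so $\mathscript{S}^{\perp}$ is closed under small coproducts. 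To apply Theorem~\ref{T:smashing_loc} I also need that $\mathscript{S}$ is Bousfield; but $\mathscript{S}$ is the localizing envelope of the \emph{set} $S$, hence well-generated, so by the discussion after Theorem~\ref{T:bousfield_local} it is a Bousfield subcategory. This gives (1), and now Theorem~\ref{T:bousfield_local} yields the adjunction $q \dashv s$ with $s$ fully faithful and, by the smashing condition, $s$ coproduct-preserving.

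Finally, for (4), I produce a set of compact generators of $\mathscript{T}/\mathscript{S}$. Let $T$ be a generating set of compact objects for $\mathscript{T}$. Since the right adjoint $s$ of $q$ preserves coproducts (by smashing), the quotient $q$ preserves compactness, so $q(T)$ is a set of compact objects of $\mathscript{T}/\mathscript{S}$. To see that it generates, suppose $y\in \mathscript{T}/\mathscript{S}$ satisfies $\Hom(q(t)[n],y)=0$ for all $t\in T$ and $n\in \Z$. By adjunction $\Hom(t[n],s(y))=0$, and since $T$ generates $\mathscript{T}$ we get $s(y)=0$; as $s$ is fully faithful, $y=0$. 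The only substantive step here is the smashing statement (1); the main potential obstacle is purely set-theoretic (ensuring $\mathscript{C}$ can be replaced by a set), which is handled by essential smallness of $\mathscript{T}^c$, so the proof is essentially a bookkeeping exercise around Thomason's theorem.
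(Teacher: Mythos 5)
Your proof is correct and follows essentially the same route as the paper: reduce to Thomason's Localization Theorem applied to the (essentially small) set underlying $\mathscript{C}$, observe that $\langle\mathscript{C}\rangle^\perp$ is closed under coproducts because the $c\in\mathscript{C}$ are compact, and deduce (4) from the smashing adjunction. The only cosmetic difference is that you make the set-theoretic reduction to a skeleton $S$ explicit and spell out the generation argument in (4), both of which the paper leaves implicit.
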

\begin{proof}
First note that $\langle\mathscript{C}\rangle$ is well-generated since
$\mathscript{T}$ is well-generated and $\langle\mathscript{C}\rangle$ is
generated by the set $\mathscript{C}$. Thus, $\langle\mathscript{C}\rangle$
is a Bousfield subcategory.
To see that $\langle\mathscript{C}\rangle$ is smashing, it is enough to prove
that $\langle\mathscript{C}\rangle^\perp$ is localizing. We have that
\[
t\in \langle\mathscript{C}\rangle^\perp \iff \Hom(c,t)=0, \;\forall c\in \langle\mathscript{C}\rangle
\iff \Hom(c,t)=0, \;\forall c\in \mathscript{C}.
\]
Since $c$ is compact, it follows that $\langle\mathscript{C}\rangle^\perp$ is
localizing. 

The claims (2) and (3) is Thomason's Localization Theorem (see 
\cite[Thm.~2.1]{MR1191736} or \cite[Thm.~3.12]{perfect_complexes_stacks}). 

For (4), we note that $q\colon \mathscript{T}\to
\mathscript{T}/\langle\mathscript{C}\rangle$ has a right adjoint (Bousfield
localization) that preserves coproducts (smashing localization). Hence $q$
takes compact objects to compact objects and a set of compact generators to a
set of compact generators.
\end{proof}

\section{Tensor ideals of tensor triangulated categories}\label{S:tensor_ideals}
In this section, we consider natural variations of the results of the previous section for tensor triangulated categories.

We recall the following definition from \cite[Def.~1.1]{MR2196732}: a \emph{tensor} triangulated category $(\mathscript{T},\otimes, 1)$ is a triangulated category $\mathscript{T}$ with a tensor product $\otimes\colon \mathscript{T} \times \mathscript{T} \to \mathscript{T}$ which is a triangulated functor in each variable and also makes $\mathscript{T}$ symmetric monoidal with unit $1$. We also require $\otimes$ to preserve those coproducts that exist in $\mathscript{T}$. 

A tensor triangulated functor is a triangulated functor between tensor triangulated categories that preserves the tensor product and the unit. Henceforth, we will typically suppress the $\otimes$ and $1$ from the definition of a tensor triangulated category.

To handle algebraic stacks in positive characteristic, it will be necessary for us to consider triangulated categories with a tensor product but not necessarily a unit. These will be called \emph{non-unital} tensor triangulated categories. A non-unital tensor triangulated functor will just be a triangulated functor between non-unital tensor triangulated categories that preserves the tensor product.

\subsection{Tensor ideals}
Let $\mathscript{T}$ be a non-unital tensor triangulated category. 

Let $\mathscript{S} \subseteq \mathscript{T}$ be a triangulated subcategory. We say that 
$\mathscript{S}$ is a \emph{tensor ideal} of $\mathscript{T}$ if for every $s\in 
\mathscript{S}$ and $t\in \mathscript{T}$, the tensor product $s\otimes t$ belongs to $\mathscript{S}$. 

Let $\thick(\mathscript{T})$ and $\smashing(\mathscript{T})$ denote the classes of thick and smashing, respectively, tensor ideals of $\mathscript{T}$. These classes are partially ordered by inclusion. 

If $F \colon \mathscript{T} \to \mathscript{T}'$ is a  non-unital tensor triangulated functor, then there is a natural induced map
\[
\thick(F) \colon \thick(\mathscript{T}) \to \thick(\mathscript{T}')
\]
that is given by sending a thick tensor ideal of $\mathscript{T}$ to the smallest thick 
tensor ideal of $\mathscript{T}'$ containing its image. A much more delicate result that 
we will discuss in this section is the map $\smashing(F) \colon \smashing(\mathscript{T}) 
\to \smashing(\mathscript{T}')$, which is due to Balmer--Favi. The subtlety is that while it 
is possible to define a map $\mathbb{L}(F) \colon \mathbb{L}(\mathscript{T}) \to 
\mathbb{L}(\mathscript{T}')$, where $\mathbb{L}(\mathscript{T})$ denotes the collection 
of localizing tensor ideals of $\mathscript{T}$ and similarly for 
$\mathbb{L}(\mathscript{T}')$, it is not at all obvious that there are natural conditions one 
could put on $F$ to guarantee that $\mathbb{L}(F)$ would send objects of 
$\smashing(\mathscript{T})$ to $\smashing(\mathscript{T}')$. 

The following notation will be useful: if $\mathscript{C} \subseteq \mathscript{T}$ is a class, let $\langle \mathscript{C} \rangle_{\tensor}$ denote the smallest localizing tensor ideal containing $\mathscript{C}$. 

For the remainder of this section, we assume that $\mathscript{T}$ is a tensor triangulated category (in the sense of Balmer).
\subsection{Thick tensor ideal inflation}
In Theorem \ref{T:inflation}, we saw that Thomason's Localization Theorem provided a 
method to produce smashing subcategories of compactly generated triangulated 
categories. Here we briefly explain why the same process works for tensor triangulated categories. It is most appropriate to view the following result as a corollary to Theorem \ref{T:inflation}. 
\begin{corollary}\label{C:tensor_inflation}
Let $\mathscript{T}$ be a compactly generated tensor triangulated
category. Then there are inclusion preserving maps
\[
\xymatrix{\thick(\mathscript{T}^c) \ar@<.5ex>[r]^-{\smash{\infl_{\mathscript{T}}}}  &
\smashing(\mathscript{T}) \ar@<.5ex>[l]^-{\cont_{\mathscript{T}}},} 
\]
where $\infl_{\mathscript{T}}(\mathscript{C}) = \langle \mathscript{C} \rangle$ and $\cont_{\mathscript{T}}(\mathscript{S}) = \mathscript{S} \cap \mathscript{T}^c$. Moreover, $\cont_{\mathscript{T}} \circ \infl_{\mathscript{T}} = \ID{}$ and $\langle \mathscript{C}\rangle = \langle \mathscript{C} \rangle_{\tensor}$. 
\end{corollary}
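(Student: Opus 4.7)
The plan is to regard this as a tensor-enriched elaboration of Theorem \ref{T:inflation}: most of the underlying triangulated content is already done there, and the remaining work is to verify that the tensor-ideal structure is preserved by each map. For $\cont_{\mathscript{T}}$, I would check: given a smashing tensor ideal $\mathscript{S}$ of $\mathscript{T}$, the intersection $\mathscript{S}\cap \mathscript{T}^c$ is thick as an intersection of thick subcategories, and is a tensor ideal of $\mathscript{T}^c$ because for $s\in \mathscript{S}\cap \mathscript{T}^c$ and $c\in \mathscript{T}^c$ one has $s\otimes c\in \mathscript{S}$ (since $\mathscript{S}$ is a tensor ideal) while $s\otimes c\in \mathscript{T}^c$ (since the tensor product of compact objects is again compact). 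This direction is routine.

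The substantive step is to show that $\infl_{\mathscript{T}}(\mathscript{C})=\langle \mathscript{C}\rangle$ is a tensor ideal of $\mathscript{T}$ whenever $\mathscript{C}$ is a thick tensor ideal of $\mathscript{T}^c$, since Theorem \ref{T:inflation} already ensures $\langle\mathscript{C}\rangle$ is smashing. For this I would use the classical ``fix one variable, vary the other'' technique twice. First, for each fixed $c\in \mathscript{C}$, consider
\[
\mathscript{U}_c = \{ t\in \mathscript{T} \suchthat c\otimes t\in \langle \mathscript{C}\rangle \}.
\]
Because $c\otimes -$ is a triangulated functor preserving coproducts, Lemma \ref{L:image_preimage_localizing}\itemref{LI:image_preimage_localizing:preimage} shows $\mathscript{U}_c$ is a localizing subcategory of $\mathscript{T}$. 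Using that $\mathscript{C}$ is a tensor ideal of $\mathscript{T}^c$ and that the tensor product of compacts is compact, every $t'\in \mathscript{T}^c$ satisfies $c\otimes t'\in \mathscript{C}\subseteq \langle\mathscript{C}\rangle$; hence $\mathscript{T}^c\subseteq \mathscript{U}_c$, and compact generation of $\mathscript{T}$ (via a set of generators contained in $\mathscript{T}^c$) forces $\mathscript{U}_c=\mathscript{T}$. Next, for fixed $t\in \mathscript{T}$, the subcategory
\[
\mathscript{V}_t = \{s\in \mathscript{T} \suchthat s\otimes t\in \langle\mathscript{C}\rangle\}
\]
is likewise localizing, and the previous step gives $\mathscript{C}\subseteq \mathscript{V}_t$, so $\langle \mathscript{C}\rangle\subseteq \mathscript{V}_t$. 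Together these establish that $\langle\mathscript{C}\rangle$ is a tensor ideal.

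The remaining assertions then drop out. Both maps are inclusion-preserving by inspection; the identity $\cont_{\mathscript{T}}\circ \infl_{\mathscript{T}}=\ID{}$ is precisely $\langle\mathscript{C}\rangle\cap \mathscript{T}^c=\mathscript{C}$ from Theorem \ref{T:inflation}(2); and for the final equality $\langle \mathscript{C}\rangle=\langle \mathscript{C}\rangle_{\tensor}$, the inclusion $\langle\mathscript{C}\rangle\subseteq \langle\mathscript{C}\rangle_{\tensor}$ is automatic (since $\langle\mathscript{C}\rangle_{\tensor}$ is localizing and contains $\mathscript{C}$), while the reverse inclusion uses what has just been shown, namely that $\langle\mathscript{C}\rangle$ is itself a localizing tensor ideal containing $\mathscript{C}$. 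The one place requiring real care is the two-step bootstrapping argument: one must invoke compact generation at precisely the spot where $\mathscript{U}_c$ is shown to exhaust $\mathscript{T}$, and must implicitly rely on the tensor product of compacts being compact so that the ideal structure of $\mathscript{C}\subseteq \mathscript{T}^c$ is actually usable in the inclusion $\mathscript{T}^c\subseteq \mathscript{U}_c$.
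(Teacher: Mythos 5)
Your argument is correct and is essentially the same as the paper's: reduce to showing $\langle\mathscript{C}\rangle$ is a tensor ideal via the two-step "fix one variable" bootstrap, with compact generation (Thomason's Localization Theorem) closing each step, and then read off the remaining claims from Theorem~\ref{T:inflation}. The only cosmetic differences are that you route the localizing-ness of the auxiliary subcategories through Lemma~\ref{L:image_preimage_localizing}\itemref{LI:image_preimage_localizing:preimage} and spell out the (routine, implicit in the hypotheses) well-definedness of $\cont_{\mathscript{T}}$ and the equality $\langle\mathscript{C}\rangle=\langle\mathscript{C}\rangle_{\otimes}$.
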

\begin{proof}
By Theorem \ref{T:inflation}, it is sufficient to prove that $\langle \mathscript{C} \rangle$ is a tensor ideal. 

First, let $x \in \mathscript{C}$. Consider the full subcategory $\mathscript{T}_x$ of $\mathscript{T}$ whose objects are those $t$ such that $x\otimes t \in \langle \mathscript{C} \rangle$. Clearly, $\mathscript{T}_x$ is closed under distinguished triangles and small coproducts. Moreover since $\mathscript{C}$ is a tensor ideal in $\mathscript{T}^c$, $\mathscript{T}^c \subseteq \mathscript{T}_x$. By Thomason's Localization Theorem, $\mathscript{T}_x = \mathscript{T}$. Hence, if $x\in \mathscript{C}$ and $t\in \mathscript{T}$, then $x\tensor t \in \langle \mathscript{C} \rangle$. 

Now fix $t \in \mathscript{T}$ and consider the full subcategory $\mathscript{C}_t$ of  
$\langle \mathscript{C} \rangle$ whose objects are those $x$ such that $x\otimes t \in 
\langle \mathscript{C} \rangle$. Clearly $\mathscript{C}_t$ is closed under 
distinguished triangles and small coproducts. By the above, we also know that 
$\mathscript{C} \subseteq \mathscript{C}_t$. By Thomason's Localization Theorem 
applied to $\langle \mathscript{C} \rangle$, it follows that $\mathscript{C}_t = \langle 
\mathscript{C} \rangle$. Hence, if $t\in \mathscript{T}$ and $x \in \langle 
\mathscript{C} \rangle$, then $x\otimes t \in \langle \mathscript{C} \rangle$. That is, 
$\langle \mathscript{C}\rangle$ is a tensor ideal.
\end{proof}
The following corollary of Thomason's Localization Theorem connects the telescope conjecture with the tensor telescope conjecture (cf.\ \cite[Lem.~3.2]{MR3161100} and \cite[Cor.~3.11.1(a)]{MR1436741}).
\begin{corollary}\label{C:unit_gen_thick}
  Let $\mathscript{T}$ be a tensor triangulated category that is compactly generated by its 
  unit. Then thick subcategories of $\mathscript{T}^c$ and localizing subcategories
  of $\mathscript{T}$ are tensor ideals. 
\end{corollary}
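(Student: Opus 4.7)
The plan is to mimic the two-step induction argument used in the proof of Corollary~\ref{C:tensor_inflation}, but in a simpler form: when the unit $1$ is a compact generator, Thomason's Localization Theorem tells us that $\mathscript{T}^c$ is the smallest thick subcategory of $\mathscript{T}^c$ containing $1$, and $\mathscript{T}$ itself is the smallest localizing subcategory containing $1$. So to show that a thick (resp.\ localizing) subcategory is absorbing under tensoring, it suffices to check closure under tensoring with $1$, which is tautological.

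First I would handle the thick case. Let $\mathscript{C}\subseteq \mathscript{T}^c$ be thick and fix $s\in \mathscript{C}$. I form the full subcategory
\[
\mathscript{T}^c_s=\{t\in \mathscript{T}^c \suchthat s\otimes t\in \mathscript{C}\}.
\]
Because $\otimes$ is a triangulated functor in each variable and $\mathscript{C}$ is thick, $\mathscript{T}^c_s$ is a thick subcategory of $\mathscript{T}^c$. Moreover $s\otimes 1\simeq s\in \mathscript{C}$, so $1\in \mathscript{T}^c_s$. Since by Thomason's Localization Theorem $\mathscript{T}^c$ equals the thick closure of $\{1\}$, we conclude $\mathscript{T}^c_s=\mathscript{T}^c$. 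As $s\in \mathscript{C}$ was arbitrary, $\mathscript{C}$ is a tensor ideal.

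For localizing subcategories $\mathscript{S}\subseteq \mathscript{T}$, the argument is essentially identical but now uses that $\mathscript{T}=\langle 1\rangle$ as a localizing subcategory (which is the content of $1$ being a compact generator, together with Thomason's Localization Theorem in the form that $\langle 1\rangle$ is compactly generated by $1$ and equals $\mathscript{T}$ because $\{1\}$ generates $\mathscript{T}$). Fix $s\in \mathscript{S}$, set
\[
\mathscript{T}_s=\{t\in \mathscript{T} \suchthat s\otimes t\in \mathscript{S}\},
\]
and observe that $\mathscript{T}_s$ is closed under distinguished triangles (bifunctoriality of $\otimes$) and under small coproducts (since $\otimes$ preserves coproducts and $\mathscript{S}$ is localizing). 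Thus $\mathscript{T}_s$ is a localizing subcategory of $\mathscript{T}$ containing $1$, hence $\mathscript{T}_s=\mathscript{T}$. So $s\otimes t\in \mathscript{S}$ for all $t\in \mathscript{T}$, proving $\mathscript{S}$ is a tensor ideal.

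There is no real obstacle here; the one point to watch is that both proofs rely on the identifications of the ambient categories as $\langle 1\rangle$-envelopes, which is precisely the hypothesis that $\mathscript{T}$ is compactly generated by $1$ (combined with Thomason's Localization Theorem invoked in \S2 of the paper). Notice that the argument only uses one induction, not the nested two used in Corollary~\ref{C:tensor_inflation}, because we are starting from a distinguished single object $1$ rather than from an arbitrary thick subcategory $\mathscript{C}$ of compact objects.
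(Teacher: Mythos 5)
Your proof is correct and follows essentially the same route as the paper's: for a fixed object $s$ of the given subcategory, form the full subcategory of objects whose tensor with $s$ lands back in the subcategory, observe it is thick (resp.\ localizing) and contains the unit, and conclude via Thomason's Localization Theorem that it is all of $\mathscript{T}^c$ (resp.\ $\mathscript{T}$). The only cosmetic difference is that the paper treats the thick and localizing cases in a single parenthetical rather than as two separate paragraphs.
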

\begin{proof}
  Let $\mathscript{C}$ be a thick subcategory of $\mathscript{T}^c$ (resp.\ a
  localizing subcategory of $\mathscript{T}$). Let $c\in \mathscript{C}$ and
  consider the subcategory $\mathscript{S}$ of $\mathscript{T}^c$
  (resp.\ $\mathscript{T}$) whose objects are those $t$ such that $t\tensor c
  \in \mathscript{C}$. Obviously, $1 \in \mathscript{S}$. Moreover,
  $\mathscript{S}$ is a thick subcategory of $\mathscript{T}^c$
  (resp.\ a localizing subcategory of $\mathscript{T}$). By Thomason's
  Localization Theorem, $\langle 1 \rangle=\mathscript{T}$ and $\mathscript{T}^c$ is the
  smallest thick subcategory containing $1$. Thus $\mathscript{S} =
  \mathscript{T}^c$ (resp.\ $\mathscript{S} = \mathscript{T}$).
\end{proof}
\subsection{Duals and rigidity}
If $t\in \mathscript{T}$, let $\SHom_{\mathscript{T}}(t,-)$ be a right adjoint to the functor $-\otimes t\colon \mathscript{T} \to \mathscript{T}$ whenever it exists. Typically, $\SHom_{\mathscript{T}}(t,-)$ is referred to as the \emph{internal} $\Hom$. This is for good reasons: if $t' \in \mathscript{T}$, then
\[
\Hom_{\mathscript{T}}(t,t') =  \Hom_{\mathscript{T}}(1\otimes t,t') = \Hom_{\mathscript{T}}(1,\SHom_{\mathscript{T}}(t,t')). 
\]
If an internal $\Hom$ exists at $t$, then for every $t' \in \mathscript{T}$ there is a natural morphism
\begin{equation}\label{E:dual-mult}
\SHom_{\mathscript{T}}(t,1) \otimes t' \to \SHom_{\mathscript{T}}(t,t')
\end{equation}
induced from the counit map $\SHom_{\mathscript{T}}(t,1) \otimes t \to 1$ by
tensoring with $t'$ and adjunction.
We say that $t$ is \emph{rigid} or \emph{strongly dualizable} if the
morphism~\eqref{E:dual-mult} is an isomorphism for every $t'$. We will denote $\SHom_{\mathscript{T}}(t,1)$ by $t^\vee$. Thus, if $t$ is rigid, then the natural morphisms
\[
\Hom_{\mathscript{T}}(x,t^\vee\otimes y) \to
\Hom_{\mathscript{T}}(x\otimes t,y),\quad\text{and}\quad t\to t^{\vee\vee}
\]
are isomorphisms.

Let $\mathscript{T}^{\rig}$ denote the full subcategory of rigid objects of $\mathscript{T}$. The following lemma connects rigid and compact objects, and also appears in \cite[Thm.~2.1.3]{MR1388895}.
\begin{lemma}\label{L:rig_compact}
  Let $\mathscript{T}$ be a tensor triangulated category.
  \begin{enumerate}
  \item \label{L:rig_compact:rig_act_compact} If $t\in \mathscript{T}^c$ and $d\in 
    \mathscript{T}^{\rig}$, then $t\tensor d \in \mathscript{T}^c$. 
  \item\label{L:rig_compact:compact_1} If $1 \in \mathscript{T}^c$, then $\mathscript{T}^{\rig} 
    \subseteq \mathscript{T}^c$.
  \item\label{L:rig_compact:compacts_are_rigid} If $\mathscript{T}^c \subseteq 
    \mathscript{T}^{\rig}$, then the restriction of $\otimes$ on 
    $\mathscript{T}$ to $\mathscript{T}^c$ makes it a non-unital tensor triangulated category.
  \end{enumerate}
\end{lemma}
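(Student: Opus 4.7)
The plan is to establish part \itemref{L:rig_compact:rig_act_compact} directly from the defining properties, and then to deduce \itemref{L:rig_compact:compact_1} and \itemref{L:rig_compact:compacts_are_rigid} as short formal consequences.

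For \itemref{L:rig_compact:rig_act_compact}, I would fix $t\in \mathscript{T}^c$, $d\in \mathscript{T}^{\rig}$ and a small family $\{x_i\}_{i\in I}$ in $\mathscript{T}$, and verify directly that the canonical map
$\bigoplus_i \Hom_{\mathscript{T}}(t\otimes d,x_i) \to \Hom_{\mathscript{T}}(t\otimes d,\bigoplus_i x_i)$
is an isomorphism. The key observation is that the rigidity of $d$, combined with the adjunction $-\otimes d\dashv \SHom_{\mathscript{T}}(d,-)$, gives natural isomorphisms
$$\Hom_{\mathscript{T}}(t\otimes d,x) \simeq \Hom_{\mathscript{T}}(t,\SHom_{\mathscript{T}}(d,x)) \simeq \Hom_{\mathscript{T}}(t,d^\vee\otimes x),$$
where the second isomorphism is the rigidity isomorphism \eqref{E:dual-mult}. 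Since $d^\vee\otimes(-)$ preserves small coproducts (this is built into the definition of a tensor triangulated category) and $t$ is compact, a short chain of canonical identifications then yields the desired isomorphism.

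Part \itemref{L:rig_compact:compact_1} is then immediate: if $d\in \mathscript{T}^{\rig}$ and $1\in \mathscript{T}^c$, then $d\simeq 1\otimes d\in \mathscript{T}^c$ by \itemref{L:rig_compact:rig_act_compact}. For \itemref{L:rig_compact:compacts_are_rigid}, the hypothesis $\mathscript{T}^c\subseteq \mathscript{T}^{\rig}$ and part \itemref{L:rig_compact:rig_act_compact} together give $s\otimes t\in \mathscript{T}^c$ whenever $s,t\in \mathscript{T}^c$ (taking $d=t\in \mathscript{T}^{\rig}$). Hence $\mathscript{T}^c$ is closed under $\otimes$, and it inherits from $\mathscript{T}$ the symmetric monoidal (minus the unit) and triangulated structure needed to make it a non-unital tensor triangulated category.

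There is no substantive obstacle here; the argument is formal. The only point worth noting is that rigidity of $d$ tacitly presupposes the existence of $\SHom_{\mathscript{T}}(d,-)$ (or at least of $d^\vee$ and the natural map \eqref{E:dual-mult} for every $t'$), and it is precisely the rewriting of $\SHom_{\mathscript{T}}(d,x)$ as $d^\vee\otimes x$ that converts a question about the possibly ill-behaved internal $\Hom$ into a question about the tensor product, which does commute with small coproducts by assumption.
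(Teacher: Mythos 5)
Your proof is correct and follows essentially the same route as the paper: you establish \itemref{L:rig_compact:rig_act_compact} via the chain $\Hom(t\otimes d,x_i)\cong\Hom(t,d^\vee\otimes x_i)$, compactness of $t$, and the fact that $\otimes$ commutes with coproducts, then deduce \itemref{L:rig_compact:compact_1} and \itemref{L:rig_compact:compacts_are_rigid} as formal corollaries. The only cosmetic difference is that you factor the rigidity identification through the adjunction and \eqref{E:dual-mult} rather than quoting the displayed isomorphism $\Hom(x,t^\vee\otimes y)\cong\Hom(x\otimes t,y)$ directly, but the content is identical.
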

\begin{proof}
  Claims \itemref{L:rig_compact:compact_1} and \itemref{L:rig_compact:compacts_are_rigid} are 
  immediate consequences of \itemref{L:rig_compact:rig_act_compact}. For  
  \itemref{L:rig_compact:rig_act_compact}, let $\{x_i\}_{i\in I}$ be a set of objects in 
  $\mathscript{T}$. Then the map $\bigoplus_i \Hom(t\tensor d,x_i) \to \Hom(t\tensor d,\bigoplus_i x_i)$ 
  factors as the following sequence of isomorphisms:
  \begin{align*}
    \bigoplus_i \Hom(t\tensor d, x_i) &\cong \bigoplus_i \Hom(t,d^\vee \tensor x_i) \cong \Hom\Bigl(t,\bigoplus_i (d^\vee \tensor x_i)\Bigr) \\
    &\cong \Hom\Bigl(t, d^\vee \otimes \bigoplus_i x_i\Bigr) \cong \Hom\Bigl(t \tensor d, \bigoplus_i x_i\Bigr).
  \end{align*}
  Hence, $t\otimes d \in \mathscript{T}^c$.
\end{proof}

A related notion is the following: $\mathscript{T}$ is \emph{closed} if for each $t \in \mathscript{T}$, an internal $\Hom$ exists. By Neeman's adjoint functor theorem: if $\mathscript{T}$ is well-generated, then $\mathscript{T}$ is always closed. The following lemma comes from \cite[Lem.~3.3.1]{MR1388895}.
\begin{lemma}\label{L:rigid_bousfield}
Let $\mathscript{T}$ be a tensor triangulated category.
Suppose that $\mathcal{S}$ is a Bousfield tensor ideal. Let $L$ denote the
Bousfield localization functor. Then
there is a natural transformation
\[
\alpha(t)\colon L(1)\otimes t\to L(t),
\]
which is an isomorphism when $t$ is rigid.
\end{lemma}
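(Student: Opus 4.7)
The plan is to construct $\alpha(t)$ using the localization triangle of the unit tensored with $t$, then use rigidity to identify $L(1)\otimes t$ as the localization of $t$.

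\textbf{Construction of $\alpha(t)$.} I would begin by tensoring the canonical triangle $\Delta_{\mathscript{S}}(1)$ with $t$:
\[
\Gamma(1)\otimes t \to t \to L(1)\otimes t \to \Gamma(1)[1]\otimes t.
\]
Because $\mathscript{S}$ is a \emph{tensor} ideal and $\Gamma(1)\in\mathscript{S}$, the object $\Gamma(1)\otimes t$ (and its shift) lies in $\mathscript{S}$. Apply $\Hom_{\mathscript{T}}(-,L(t))$ to this triangle. Since $L(t)\in\mathscript{S}^{\perp}$ by Theorem~\ref{T:bousfield_local}, the two outer Hom-groups vanish, so the restriction map
\[
\Hom_{\mathscript{T}}(L(1)\otimes t,\,L(t))\;\xrightarrow{\sim}\;\Hom_{\mathscript{T}}(t,\,L(t))
\]
is a bijection. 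I would then define $\alpha(t)$ to be the unique morphism $L(1)\otimes t \to L(t)$ corresponding to the unit $\lambda_t\colon t\to L(t)$, i.e. characterized by
\[
\alpha(t)\circ(\lambda_1\otimes\ID{t})=\lambda_t.
\]
Naturality in $t$ is immediate: for a morphism $f\colon t\to t'$, both $\alpha(t')\circ(\lambda_1\otimes f)$ and $L(f)\circ\alpha(t)$ sit in the Hom-group above and pull back to $\lambda_{t'}\circ f=L(f)\circ\lambda_t$, so they agree by the bijection.

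\textbf{Isomorphism for rigid $t$.} To show $\alpha(t)$ is invertible when $t$ is rigid, I would show that $L(1)\otimes t$ already lies in $\mathscript{S}^{\perp}$; combined with the triangle above, the uniqueness clause in Theorem~\ref{T:bousfield_local} forces $L(1)\otimes t$ to be the $L$-localization of $t$, and by naturality this identification is exactly $\alpha(t)$. For rigidity, for any $s\in\mathscript{S}$, the duality adjunction gives
\[
\Hom_{\mathscript{T}}(s,\,L(1)\otimes t)\;\cong\;\Hom_{\mathscript{T}}(s\otimes t^{\vee},\,L(1)).
\]
Now $s\otimes t^{\vee}\in\mathscript{S}$ because $\mathscript{S}$ is a tensor ideal, and $L(1)\in\mathscript{S}^{\perp}$, so this group vanishes. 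Hence $L(1)\otimes t\in\mathscript{S}^{\perp}$, completing the argument.

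\textbf{Main obstacle.} The delicate step is the construction: one must leverage the tensor-ideal hypothesis on $\mathscript{S}$ precisely to ensure $\Gamma(1)\otimes t\in\mathscript{S}$, and the rigidity-free definition of $\alpha(t)$ relies entirely on the vanishing of maps from $\mathscript{S}$ to $L(t)\in\mathscript{S}^{\perp}$. The rigidity step itself is then a short verification using the natural isomorphism $\Hom(s,L(1)\otimes t)\cong\Hom(s\otimes t^{\vee},L(1))$ together with the tensor-ideal property applied a second time.
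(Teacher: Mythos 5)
Your proof is correct and takes essentially the same route as the paper's: you tensor the localization triangle of the unit with $t$, use the tensor-ideal hypothesis to place $\Gamma(1)\otimes t$ in $\mathscript{S}$, and then for rigid $t$ verify $L$-locality of $L(1)\otimes t$ via the duality adjunction $\Hom(s,L(1)\otimes t)\cong\Hom(s\otimes t^{\vee},L(1))$. The only cosmetic difference is in the construction of $\alpha(t)$: the paper applies $L$ to the tensored triangle to obtain the isomorphism $\varphi_t\colon L(t)\to L(L(1)\otimes t)$ and sets $\alpha(t)=\varphi_t^{-1}\circ\lambda_{L(1)\otimes t}$, whereas you apply $\Hom(-,L(t))$ to obtain the bijection $\Hom(L(1)\otimes t,L(t))\cong\Hom(t,L(t))$ and characterize $\alpha(t)$ by $\alpha(t)\circ(\lambda_1\otimes\ID{t})=\lambda_t$; these produce the same morphism.
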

\begin{proof}
Tensor the exact triangle $\Gamma(1)\to 1\to L(1)$ with $t$ and apply $L$.
This gives the exact triangle $L(\Gamma(1)\otimes t)\to L(t)\to L(L(1)\otimes
t)$. Since $\Gamma(1)\otimes t\in \mathcal{S}$, the first object is zero
so we have an
isomorphism $\varphi_t\colon L(t)\homotopic L(L(1)\otimes t)$. Let $\alpha(t)$
be the composition
\[
\alpha(t)\colon L(1)\otimes t\xrightarrow{\lambda_{L(1)\otimes t}} L(L(1)\otimes t)\xrightarrow{\varphi_t^{-1}} L(t).
\]
If $t$
is rigid, then
\[
\Hom(z,L(1)\otimes t)=\Hom(z\otimes t^\vee,L(1))=0
\]
for all $L$-acyclic
$z$, i.e., $z\in \mathcal{S}$. Hence, $L(1)\otimes t$ is $L$-local, so
$\lambda_{L(1)\otimes t}$ is an isomorphism, as is $\alpha(t)$.
\end{proof}
We say that $\mathscript{T}$ is \emph{rigidly compactly generated} if it is compactly generated and $\mathscript{T}^c \subseteq \mathscript{T}^{\rig}$.

The following proposition provides an important characterization of smashing tensor ideals (cf.~\cite[Thm.~2.13]{MR2806103}). 
\begin{proposition}\label{P:characterization_smashing_ideal}
Let $\mathscript{T}$ be a tensor triangulated category. If $\mathscript{S}$ is
a Bousfield tensor ideal, then the following conditions
are equivalent:
\begin{enumerate}
\item \label{PI:characterization_smashing_ideal:ideal} $\mathscript{S}^\perp$ is a tensor ideal.
\item \label{PI:characterization_smashing_ideal:L} There exists an isomorphism of functors $L\simeq L(1)\otimes-$.
\item \label{PI:characterization_smashing_ideal:Gamma} There exists an isomorphism of functors $\Gamma \simeq \Gamma(1) \otimes -$. 
\item \label{PI:characterization_smashing_ideal:Delta} There exists an isomorphism of functors $\Delta\simeq \Delta(1)\otimes-$.
\end{enumerate}
These equivalent conditions imply that $\mathscript{S}$ is smashing. The
converse holds if $\mathscript{T}$ is rigidly compactly generated.
\end{proposition}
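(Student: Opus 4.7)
The plan is to prove a loop of implications (1)$\Leftrightarrow$(2)$\Leftrightarrow$(3)$\Leftrightarrow$(4), then derive the smashing claim from (2), and finally handle the converse under the rigidly compactly generated hypothesis using Lemma \ref{L:rigid_bousfield}.

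For (4)$\Rightarrow$(2) and (4)$\Rightarrow$(3), simply project $\Delta\simeq\Delta(1)\otimes-$ onto its localization and colocalization terms. For (2)$\Rightarrow$(1), take any $x\in\mathscript{S}^\perp$ and $t\in\mathscript{T}$ and use (2) together with the identity $L(x)\simeq x$ to compute
\[
L(t\otimes x)\simeq L(1)\otimes t\otimes x\simeq t\otimes L(x)\simeq t\otimes x,
\]
showing $t\otimes x\in\mathscript{S}^\perp$. The parallel argument applied to $\Gamma$ gives (3)$\Rightarrow$(1). For (1)$\Rightarrow$(4), I would tensor the canonical triangle $\Delta(1)$ with an arbitrary $t$ to obtain
\[
\Gamma(1)\otimes t\to t\to L(1)\otimes t\to \Gamma(1)\otimes t[1].
\]
Because $\mathscript{S}$ is a tensor ideal and $\Gamma(1)\in\mathscript{S}$, the first term lies in $\mathscript{S}$; by (1) and $L(1)\in\mathscript{S}^\perp$, the third term lies in $\mathscript{S}^\perp$. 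Uniqueness of the Bousfield triangle in Theorem \ref{T:bousfield_local} then canonically identifies this triangle with $\Delta(t)$, and the bifunctoriality of $\otimes$ together with the functoriality of $\Delta$ make the identification natural in $t$, giving (4).

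The smashing claim falls out at once from (2): $L(1)\otimes-$ preserves small coproducts, so $L$ does, and Theorem \ref{T:smashing_loc} delivers the conclusion. For the converse under the rigidly compactly generated hypothesis, Lemma \ref{L:rigid_bousfield} provides a natural transformation $\alpha(t)\colon L(1)\otimes t\to L(t)$ that is an isomorphism whenever $t$ is rigid; in particular $\alpha(c)$ is an isomorphism for every compact $c$, because $\mathscript{T}^c\subseteq\mathscript{T}^{\rig}$. Under the smashing assumption, both $L$ and $L(1)\otimes-$ preserve coproducts, so the full subcategory of $\mathscript{T}$ on which $\alpha$ is an isomorphism is triangulated and closed under small coproducts; being a localizing subcategory containing a set of compact generators, Thomason's Localization Theorem forces it to coincide with $\mathscript{T}$. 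This produces (2).

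The main obstacle I anticipate is ensuring that the object-wise isomorphisms constructed in (1)$\Rightarrow$(4) assemble into a genuine natural isomorphism of functors $\Delta\simeq\Delta(1)\otimes-$ rather than a mere family of isomorphisms. I plan to resolve this by routing the argument through the functorial uniqueness statement built into Theorem \ref{T:bousfield_local}, rather than attempting to construct the natural transformations by hand; this also ensures that all four conditions are equivalent as statements about functors, not just as pointwise statements.
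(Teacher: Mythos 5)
Your proof is correct and follows essentially the same route as the paper's: (1)$\Rightarrow$(4) via uniqueness of the Bousfield triangle applied to $\Delta(1)\otimes t$, (4)$\Rightarrow$(2),(3) by projection, (2)$\Rightarrow$(1) and (3)$\Rightarrow$(1) by the same $L(1)\otimes-$ and $\Gamma(1)\otimes-$ computations, smashing from coproduct preservation of $L(1)\otimes-$, and the converse via Lemma~\ref{L:rigid_bousfield} together with compact generation by rigid objects. The only cosmetic difference is that the paper phrases (2)$\Rightarrow$(1) as $L(x)\otimes y\simeq L(x\otimes y)$ landing in $\im(L)=\mathscript{S}^\perp$ rather than via $L(x)\simeq x$ for $x\in\mathscript{S}^\perp$, but these are the same observation.
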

\begin{proof}
For \itemref{PI:characterization_smashing_ideal:ideal} $\Rightarrow$ 
\itemref{PI:characterization_smashing_ideal:Delta}: the triangle $\Delta(1) \otimes t$ is uniquely isomorphic to $\Delta(t)$ by Theorem \ref{T:bousfield_local}. Also, \itemref{PI:characterization_smashing_ideal:Delta} easily implies \itemref{PI:characterization_smashing_ideal:L} and \itemref{PI:characterization_smashing_ideal:Gamma}. In particular, $L$ preserves
small coproducts so $\mathscript{S}$ is smashing.

For \itemref{PI:characterization_smashing_ideal:L} $\Rightarrow$ \itemref{PI:characterization_smashing_ideal:ideal}: if $x$, $y\in \mathscript{T}$, then $L(x)\otimes y \simeq L(1) \otimes x\otimes y \simeq L(x\otimes y)$. In particular, $\mathscript{S}^\perp$ is a tensor ideal. 

For \itemref{PI:characterization_smashing_ideal:Gamma} $\Rightarrow$ 
\itemref{PI:characterization_smashing_ideal:ideal}: if $x\in \mathscript{S}^\perp$ and $y\in \mathscript{T}$, then $\Gamma(x\otimes y) \simeq \Gamma(1)  \otimes x\otimes y \simeq \Gamma(x)\otimes y \simeq 0$. In particular, $\mathscript{S}^\perp$ is a tensor ideal. 

Conversely if $\mathscript{S}$ is smashing, then for each $t\in \mathscript{T}$ there is a natural
transformation $\alpha(t)\colon L(1)\otimes t\to L(t)$, which is an isomorphism when $t$ is rigid (Lemma \ref{L:rigid_bousfield}). Since $L$ preserves coproducts and
$\mathscript{T}$ is compactly generated by rigid objects, it follows that
$\alpha$ is an isomorphism for all $t$. 
\end{proof}

\begin{example}
We have seen that Example \ref{E:stack_loc_col} is a smashing localization.
The kernel $\mathscript{S}=\ker L$ is also a tensor ideal. Indeed, the natural map
\[
\alpha(M)\colon L(\Orb_X)\otimes \cplx{M} = (\RDERF \QCPSH{j}\Orb_U)\otimes_{\Orb_X} \cplx{M}
\to \RDERF \QCPSH{j}\LDERF \QCPBK{j}(\cplx{M})=L(\cplx{M})
\]
is an isomorphism by the projection formula~\cite[Cor.~4.12]{perfect_complexes_stacks}.
Thus $L(\cplx{M}\otimes \cplx{N})=L(\cplx{M})\otimes \cplx{N}$ so $\mathscript{S}$ is a tensor ideal.
\end{example}
\subsection{Idempotents}
We conclude this section with a discussion of idempotents in tensor triangulated categories. These have been studied in detail in several places \cite{MR1388895,MR2806103,MR1462832,MR2489634}. For this subsection, we will assume that $\mathscript{T}$ is rigidly compactly generated.

 The 
relevance here is that a smashing tensor ideal is determined by the map $1 \to L(1)$, 
which is a \emph{right idempotent}. Similarly, a smashing tensor ideal is determined 
by the map $\Gamma(1) \to 1$, which is a \emph{left idempotent}. Let us make this 
precise.

An \emph{idempotent triangle} is a distinguished triangle $e \xrightarrow{\gamma} 1 
\xrightarrow{\lambda} f \to e[1]$ such that $e\tensor f \simeq 0$. A \emph{left 
idempotent} is a morphism $\gamma \colon e \to 1$ such that $\gamma \tensor e$ is 
an isomorphism. A \emph{right idempotent} is a morphism $\lambda \colon 1 \to f$ 
such that $\lambda\tensor f$ is an isomorphism. For each type of idempotent, there is a 
natural notion of morphisms between them.

It is easily seen that there are classes $\mathbb{D}(\mathscript{T})$, 
$\mathbb{E}(\mathscript{T})$, and $\mathbb{F}(\mathscript{T})$ with elements the 
isomorphism classes of idempotent triangles, left idempotents, and right idempotents, 
respectively. It is readily observed that these classes are all partially ordered under the 
relation of ``there exists a morphism'' \cite[3.2]{MR2806103}. There are also natural 
maps of partial orders:
\[
 \xymatrix{\mathbb{E}(\mathscript{T}) \ar[r] \ar[dr] & \ar[l] \ar[d] \mathbb{D}(\mathscript{T}) \ar[r] & \ar[l] \ar[dl] \mathbb{F}(\mathscript{T}) \\ & \mathbb{S}(\mathscript{T}). \ar[u] \ar[ul] \ar[ur] & }
\]
The maps to and from $\smashing(\mathscript{T})$ are provided by Proposition \ref{P:characterization_smashing_ideal}: a smashing tensor ideal $\mathscript{S}$
is mapped to the idempotent triangle $\Gamma(1)\to 1\to L(1)$; a left
idempotent $e\to 1$ is mapped to the smashing tensor ideal
$\im(e\otimes -)$; a right idempotent $1\to f$ is mapped to the smashing tensor
ideal $\ker(f\otimes -)$.
A key insight of Balmer--Favi is that these maps are all equivalences \cite[\S3]{MR2806103} of partial orders. In fact, they prove more: 
$\mathbb{D}(\mathscript{T})$ is a lattice, with supremum given by $\tensor$ in 
$\mathbb{F}(\mathscript{T})$ and infimum given by $\tensor$ in 
$\mathbb{E}(\mathscript{T})$. Henceforth, we will use the identification between 
smashing tensor ideals and the various idempotents freely.

A key observation here is the following: if $F \colon \mathscript{T} \to \mathscript{T}'$ 
is a tensor triangulated functor between rigidly compactly generated triangulated categories, then there is a 
lattice homomorphism
\[
\smashing(F) \colon \smashing(\mathscript{T}) \to \smashing(\mathscript{T}'). 
\]
In terms of idempotents, $\smashing(F)$ takes an idempotent triangle 
$e \xrightarrow{\gamma} 1 \xrightarrow{\lambda} f \to e[1]$ to
$F(e) \xrightarrow{F(\gamma)} F(1) \xrightarrow{F(\lambda)} F(f) \to F(e)[1]$.
In terms of smashing tensor ideals, $\smashing(F)$ takes a smashing tensor ideal
$\mathscript{S}$ to $\langle F(\mathscript{S})
\rangle_{\tensor}=\im\bigl(F(e)\otimes -\bigr)$.

The following variant of \cite[Thm.~6.3]{MR2806103} will be important.
\begin{theorem}\label{T:infl_smash_func}
  Let $F \colon \mathscript{T} \to \mathscript{T}'$ be a tensor triangulated functor 
  between rigidly compactly generated tensor triangulated categories that preserves small coproducts.
  If $F(\mathscript{T}^c) \subseteq \mathscript{T}'^c$, then the following 
  diagram commutes:
  \[
  \xymatrix{\thick(\mathscript{T}^c) \ar[d]_{\thick(F^c)} \ar[r]^{\infl_{\mathscript{T}}} & \smashing(\mathscript{T}) \ar[d]^{\smashing(F)} \\
    \thick(\mathscript{T}'^c) \ar[r]^{\infl_{\mathscript{T'}}} &
    \smashing(\mathscript{T'}).}
  \]
\end{theorem}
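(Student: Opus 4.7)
Fix $\mathscript{C}\in\thick(\mathscript{T}^c)$. The plan is to show that both compositions in the square identify $\mathscript{C}$ with the single localizing tensor ideal $\langle F(\mathscript{C})\rangle_{\tensor}\subseteq\mathscript{T}'$. Unwinding the definitions and the description of $\smashing(F)$ given just before the theorem, the route $\infl_{\mathscript{T}}$ followed by $\smashing(F)$ yields $\langle F(\langle \mathscript{C}\rangle)\rangle_{\tensor}$, while the route $\thick(F^c)$ followed by $\infl_{\mathscript{T}'}$ yields $\langle \thick(F^c)(\mathscript{C})\rangle$, which agrees with $\langle \thick(F^c)(\mathscript{C})\rangle_{\tensor}$ by Corollary \ref{C:tensor_inflation} applied to $\mathscript{T}'$.

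For the second route, the hypothesis $F(\mathscript{T}^c)\subseteq\mathscript{T}'^c$ identifies $\thick(F^c)(\mathscript{C})$ with the smallest thick tensor ideal of $\mathscript{T}'^c$ containing $F(\mathscript{C})$. The inclusion $\langle F(\mathscript{C})\rangle_{\tensor}\subseteq \langle \thick(F^c)(\mathscript{C})\rangle_{\tensor}$ is immediate. For the reverse inclusion, I would observe that $\langle F(\mathscript{C})\rangle_{\tensor}\cap\mathscript{T}'^c$ is itself a thick tensor ideal of $\mathscript{T}'^c$ containing $F(\mathscript{C})$, and therefore already contains $\thick(F^c)(\mathscript{C})$.

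For the first route, the easy inclusion $\langle F(\mathscript{C})\rangle_{\tensor}\subseteq \langle F(\langle \mathscript{C}\rangle)\rangle_{\tensor}$ comes from $F(\mathscript{C})\subseteq F(\langle \mathscript{C}\rangle)$. The reverse inclusion is the tensor analogue of Lemma \ref{L:image_preimage_localizing}\itemref{LI:image_preimage_localizing:image}: since $F$ preserves coproducts, the preimage $F^{-1}(\langle F(\mathscript{C})\rangle_{\tensor})$ is a localizing subcategory of $\mathscript{T}$ by Lemma \ref{L:image_preimage_localizing}\itemref{LI:image_preimage_localizing:preimage}; it obviously contains $\mathscript{C}$, hence also contains $\langle \mathscript{C}\rangle$, so $F(\langle \mathscript{C}\rangle)\subseteq\langle F(\mathscript{C})\rangle_{\tensor}$, which gives the desired inclusion after taking localizing tensor closures. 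I do not anticipate a serious obstacle: the rigidly compactly generated hypothesis enters only through the existence and explicit description of $\smashing(F)$, while $F(\mathscript{T}^c)\subseteq\mathscript{T}'^c$ enters only to make $\thick(F^c)$ well defined; the remaining content is the standard Thomason-style bookkeeping already packaged in Corollary \ref{C:tensor_inflation} and Lemma \ref{L:image_preimage_localizing}.
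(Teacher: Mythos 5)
Your proof is correct and follows essentially the same route as the paper: both arguments reduce the square to the single equality $\langle F(\mathscript{C})\rangle_{\tensor}$ via Corollary~\ref{C:tensor_inflation} and Lemma~\ref{L:image_preimage_localizing}\itemref{LI:image_preimage_localizing:image}. If anything, your version is slightly more careful, since you keep the tensor-ideal closure $\langle-\rangle_{\tensor}$ throughout rather than identifying $\langle\thick(F^c)(\mathscript{C})\rangle$ with $\langle F(\mathscript{C})\rangle$ directly, an identification which does require the observation (made explicit by you) that $\langle F(\mathscript{C})\rangle_{\tensor}\cap\mathscript{T}'^c$ is a thick tensor ideal containing $F(\mathscript{C})$.
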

\begin{proof}
  Let $\mathscript{C} \subseteq \mathscript{T}^c$ be a thick tensor ideal
  and let $\mathscript{C'}=\thick(F^c)(\mathscript{C})$ be the smallest
  thick tensor ideal containing $F(\mathscript{C})$. Corollary 
  \ref{C:tensor_inflation} implies that $\mathscript{S} = \infl(\mathscript{C}) = \langle 
\mathscript{C} \rangle$ and $\mathscript{S}' = \infl(\mathscript{C'}) = \langle 
\mathscript{C'} \rangle = \langle F(\mathscript{C}) \rangle$ are smashing tensor ideals. Also, $\langle F(\mathscript{C}) 
\rangle = \langle F(\langle \mathscript{C} \rangle) \rangle = \langle F(\mathscript{S}) 
\rangle$ (Lemma 
\ref{L:image_preimage_localizing}\itemref{LI:image_preimage_localizing:image}). In 
particular, $\mathscript{S}' = \langle F(\mathscript{C}) 
\rangle_{\otimes} = \smashing(F)(\mathscript{S})$.  
\end{proof}

Note that it is not at all obvious that the corresponding diagram in Theorem \ref{T:infl_smash_func} with $\cont$ instead of $\infl$ commutes. 
\begin{example}\label{E:conc_pbk}
  The condition that $F(\mathscript{T}^c) \subseteq \mathscript{T}'^c$
  is satisfied when the right adjoint of $F$ preserves small
  coproducts \cite[Thm.~5.1]{MR1308405}. In particular, if $f\colon X' \to X$ is a 
  concentrated morphism of algebraic stacks (e.g., quasi-compact, quasi-separated, 
  and representable), then $\LDERF \QCPBK{f} \colon \DQCOH(X) \to 
  \DQCOH(X')$ preserves compact objects. 
\end{example}

\section{The tensor triangulated telescope conjecture}
We are now in a position to state the tensor triangulated telescope conjecture. For background and some recent progress on this conjecture see \cite{MR3161100}. 

So 
let $\mathscript{T}$ be a rigidly compactly generated tensor triangulated category. Recall that $\thick(\mathscript{T}^c)$ is the class of thick tensor ideals of $\mathscript{T}^c$ and 
$\smashing(\mathscript{T})$ is the class of smashing tensor ideals of 
$\mathscript{T}$. By Corollary \ref{C:tensor_inflation}, there is an order preserving map
\[
\infl_{\mathscript{T}} \colon \thick(\mathscript{T}^c) \to \smashing(\mathscript{T}).
\]
 \begin{definition}[Tensor triangulated telescope conjecture~{\cite[Def.~4.2]{MR2806103}}]
We say that the \emph{tensor triangulated telescope conjecture} holds for $\mathscript{T}$ if $\infl_{\mathscript{T}}$ is bijective.
\end{definition}
Note that Corollary \ref{C:tensor_inflation} proves that $\infl_{\mathscript{T}}$ is injective. In particular, the difficulty lies in establishing the surjectivity of $\infl_{\mathscript{T}}$. By Corollary \ref{C:unit_gen_thick}, the tensor triangulated telescope conjecture for $\DCAT(R)$ is just the usual telescope conjecture. There are examples of non-noetherian rings $R$ for which the telescope conjecture fails for $\DCAT(R)$ \cite{MR1285956}. 
\section{Injectivity results}\label{S:injectivity}
In this section, we now prove some new results. We would like to emphasize that they are really very straightforward from the perspective afforded by idempotents.
\begin{theorem}\label{T:idempotent_injective}
  Let $F \colon \mathscript{T} \to \mathscript{T}'$ be a tensor triangulated functor 
  between rigidly compactly generated tensor triangulated categories. If $F$ is 
  conservative, then 
  \[
  \smashing(F) \colon \smashing(\mathscript{T}) \to \smashing(\mathscript{T'})
  \]
  is injective. 
\end{theorem}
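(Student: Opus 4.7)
My plan is to exploit the identification of smashing tensor ideals with idempotent triangles under the Balmer--Favi lattice isomorphism $\mathbb{D}(\mathscript{T}) \iso \smashing(\mathscript{T})$, and then use conservativity of $F$ to reflect the comparison of idempotents in $\mathscript{T}'$ back to $\mathscript{T}$.

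Concretely, fix $\mathscript{S}_1, \mathscript{S}_2 \in \smashing(\mathscript{T})$ with idempotent triangles $\Delta_i = (e_i \to 1 \to f_i \to e_i[1])$, and suppose $\smashing(F)(\mathscript{S}_1) = \smashing(F)(\mathscript{S}_2)$. According to the explicit description of $\smashing(F)$ on idempotents recalled before Theorem~\ref{T:infl_smash_func}, the idempotent triangles attached to $\smashing(F)(\mathscript{S}_i)$ are precisely $F(\Delta_i) = (F(e_i) \to 1' \to F(f_i))$. Since idempotent triangles attached to equal smashing ideals are isomorphic (by the Balmer--Favi bijection $\mathbb{D}(\mathscript{T}') \iso \smashing(\mathscript{T}')$), I get isomorphisms $F(e_1) \iso F(e_2)$ and $F(f_1) \iso F(f_2)$.

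The key computation is now a one-line tensor vanishing: because $F$ is a tensor functor and $e_2 \otimes f_2 = 0$ in any idempotent triangle,
\[
F(e_1 \otimes f_2) \iso F(e_1) \otimes F(f_2) \iso F(e_2) \otimes F(f_2) \iso F(e_2 \otimes f_2) = 0.
\]
Conservativity of $F$ then forces $e_1 \otimes f_2 = 0$, and symmetrically $e_2 \otimes f_1 = 0$.

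Finally, I translate these vanishings back into containments of smashing ideals. By Proposition~\ref{P:characterization_smashing_ideal}, $\mathscript{S}_2 = \ker(L_2) = \ker(f_2 \otimes -)$, while $\mathscript{S}_1 = \im(\Gamma_1) = \im(e_1 \otimes -)$. The identity $f_2 \otimes e_1 = 0$ implies $f_2 \otimes (e_1 \otimes t) = 0$ for every $t \in \mathscript{T}$, so every object of $\mathscript{S}_1$ lies in $\mathscript{S}_2$; the reverse inclusion follows from $e_2 \otimes f_1 = 0$. This gives $\mathscript{S}_1 = \mathscript{S}_2$ and proves injectivity. There is no significant obstacle here beyond faithfully packaging the idempotent correspondence with the conservativity hypothesis; rigid compact generation is used only to apply Proposition~\ref{P:characterization_smashing_ideal} and the Balmer--Favi identification on both sides.
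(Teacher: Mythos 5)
Your proof is correct and follows essentially the same strategy as the paper: pass to the Balmer--Favi idempotent description of smashing tensor ideals and use conservativity of $F$ to reflect the comparison from $\mathscript{T}'$ back to $\mathscript{T}$. The only variation is where conservativity is invoked. The paper uses the lattice structure on $\mathbb{D}(\mathscript{T})$: from $\smashing(F)(\Delta) = \smashing(F)(\Delta')$ it deduces that $F(e\otimes\gamma')$ is an isomorphism, hence $e\otimes\gamma'\colon e\otimes e'\to e$ is an isomorphism and $\Delta = \Delta\wedge\Delta'$. You instead apply conservativity to a vanishing statement: $F(e_1\otimes f_2)\cong F(e_2)\otimes F(f_2) = 0$, hence $e_1\otimes f_2 = 0$, and then translate this into the containment $\mathscript{S}_1 = \im(e_1\otimes -)\subseteq\ker(f_2\otimes-) = \mathscript{S}_2$. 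These are two faces of the same fact --- tensoring the triangle $e'\to 1\to f'$ with $e$ shows that $e\otimes\gamma'$ is an isomorphism if and only if $e\otimes f'=0$ --- so your version bypasses the lattice meet $\wedge$ and is slightly more self-contained, at the modest cost of having to prove both containments separately rather than appealing to a symmetry-of-meet argument.
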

\begin{proof}
  We prove this using idempotents: consider idempotent triangles $\Delta=(e 
\xrightarrow{\gamma} 1 \xrightarrow{\lambda} f \to e[1])$ and $\Delta'=(e' 
\xrightarrow{\gamma'} 1 \to f' \xrightarrow{\lambda'} e'[1])$ such that 
$\smashing(F)(\Delta) = \smashing(F)(\Delta')$. Since $\smashing(\mathscript{T})$ is 
a lattice, it is sufficient to prove that $\Delta=\Delta \wedge \Delta'$ (by symmetry, the result follows). Now the idempotent corresponding to $\Delta \wedge \Delta'$ is $e \tensor e' \xrightarrow{\gamma \tensor \gamma'} 1$ and there is a morphism of idempotents $e \tensor e' \xrightarrow{e \tensor \gamma'} e$. By assumption, $F(e\tensor \gamma')$ is an isomorphism. Since $F$ is conservative, $e\tensor \gamma'$ is an isomorphism. It follows that $\Delta = \Delta \wedge \Delta'$ and we have the claim.
\end{proof}
An immediate consequence of Theorems \ref{T:infl_smash_func}, \ref{T:idempotent_injective} and Corollary \ref{C:tensor_inflation} is the following.
\begin{corollary}\label{C:thick_injective}
  Let $F \colon \mathscript{T} \to \mathscript{T}'$ be a tensor triangulated functor 
  between rigidly compactly generated tensor triangulated categories that preserves small coproducts. If $F$ is 
  conservative and $F(\mathscript{T}^c) \subseteq \mathscript{T}'^c$, then 
  \[
  \thick(F^c) \colon \thick(\mathscript{T}^c) \to \thick(\mathscript{T'}^c)
  \]
  is injective. 
\end{corollary}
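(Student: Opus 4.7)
The plan is to chase the commutative square provided by Theorem~\ref{T:infl_smash_func} and observe that three of its sides are injective, forcing the fourth to be so as well. More precisely, Theorem~\ref{T:infl_smash_func} applies under exactly the hypotheses of the corollary ($F$ is a tensor triangulated functor between rigidly compactly generated categories, preserves coproducts, and sends compacts to compacts), so we obtain the commutative diagram
\[
\xymatrix{\thick(\mathscript{T}^c) \ar[d]_{\thick(F^c)} \ar[r]^{\infl_{\mathscript{T}}} & \smashing(\mathscript{T}) \ar[d]^{\smashing(F)} \\
\thick(\mathscript{T}'^c) \ar[r]^{\infl_{\mathscript{T'}}} & \smashing(\mathscript{T'}).}
\]

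Next, I would invoke Corollary~\ref{C:tensor_inflation}, which says that $\cont_{\mathscript{T}'}\circ \infl_{\mathscript{T}'}=\ID{\thick(\mathscript{T}'^c)}$; this immediately implies that $\infl_{\mathscript{T}'}$ is injective. Since $F$ is assumed conservative, Theorem~\ref{T:idempotent_injective} tells us that $\smashing(F)$ is injective as well. The composite $\infl_{\mathscript{T}'}\circ \thick(F^c) = \smashing(F)\circ \infl_{\mathscript{T}}$ is thus a composition of injective maps along the top-and-right path, so it is injective. Because $\infl_{\mathscript{T}'}$ is injective, the standard cancellation principle (if $g\circ h$ is injective then $h$ is injective) forces $\thick(F^c)$ itself to be injective.

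There is essentially no obstacle here: all the hard work has been done in Theorems \ref{T:infl_smash_func} and \ref{T:idempotent_injective}, and the corollary is simply the formal diagram chase that combines them. The only point worth double-checking is that the hypotheses line up: rigid compact generation and preservation of coproducts are needed for Theorem~\ref{T:infl_smash_func}, the compact-preservation $F(\mathscript{T}^c)\subseteq \mathscript{T}'^c$ ensures that $\thick(F^c)$ is defined at the level of compacts, and conservativity is exactly what Theorem~\ref{T:idempotent_injective} requires. No further structure on $F$ is needed.
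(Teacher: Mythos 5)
Your proposal is correct and is exactly the argument the paper intends: the paper declares this corollary ``an immediate consequence'' of Theorems~\ref{T:infl_smash_func}, \ref{T:idempotent_injective} and Corollary~\ref{C:tensor_inflation}, and your diagram chase is the only sensible way to read that. One tiny remark on exposition: the injectivity you really use in the chain $\smashing(F)\circ\infl_{\mathscript{T}}$ is that of $\infl_{\mathscript{T}}$ (and $\smashing(F)$), not of $\infl_{\mathscript{T}'}$ as you write --- though of course $\infl_{\mathscript{T}}$ is injective for the same reason via Corollary~\ref{C:tensor_inflation}, and the final cancellation step needs nothing about $\infl_{\mathscript{T}'}$.
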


\newcommand{\Subsets}{\mathcal{P}}
\newcommand{\Op}{\mathrm{Op}}
\newcommand{\Cl}{\mathrm{Cl}}
\newcommand{\qc}{\mathrm{qc}}
\newcommand{\cons}{\mathrm{cons}}
\section{Topologies}
In this section, we briefly recall some results on the constructible topology on algebraic 
stacks (cf.\ \cite[\S 5]{MR1771927}, \cite[\S IV.1.8--9]{EGA} and~\cite{MR2679038}).

Let $X$ be a quasi-separated algebraic stack. Let $E\subseteq |X|$ be a
subset. We say that $E$ is:
\begin{itemize}
\item \emph{retrocompact} if for every quasi-compact open $U \subseteq |X|$, $E \cap U$ is quasi-compact;
\item \emph{globally constructible} if $E$ is a finite
  union of subsets of the form $W_1\cap {(X\setminus W_2)}$ where
  $W_1$ and $W_2$ are retro-compact open subsets of $X$;
\item \emph{constructible} if $E$ is globally constructible Zariski-locally on $X$;
\item \emph{C-open} if $E$ is open in the constructible topology, i.e.,
  ind-constructible, i.e., a union of constructible subsets;
\item \emph{C-closed} if $E$ is closed in the constructible topology, i.e.,
  pro-constructible, i.e., an intersection of constructible subsets;
\item \emph{S-open} if $E$ is stable under generizations; and
\item \emph{S-closed} if $E$ is stable under specializations.
\end{itemize}
Every morphism of algebraic stacks is continuous in both the C-topology and the S-topology.
Moving constructible sets from an algebraic stack to another is typically accomplished by Chevalley's Theorem, which we now recall.
\begin{theorem}[Chevalley's Theorem]\label{L:chevalley}
Let $f\colon Z\to X$ be a morphism between quasi-separated algebraic stacks.
\begin{enumerate}
\item\label{TI:Chevalley:image} If $f$ is of finite
  presentation, then $f(Z)$ is constructible.
\item\label{TI:Chevalley:cons} If $f$ is of finite
  presentation and $E\subseteq |Z|$ is constructible, then $f(E)$ is
  constructible.
\item\label{TI:Chevalley:ind-cons} If $f$ is locally of
  finite presentation and $E\subseteq |Z|$ is ind-constructible, then $f(E)$
  is ind-constructible.
\end{enumerate}
\end{theorem}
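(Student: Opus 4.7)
The plan is to reduce all three assertions to Chevalley's classical theorem for schemes, using smooth atlases. The principal technical input, which is also the main obstacle, is a descent statement: if $p\colon U\to X$ is a smooth surjection from a scheme $U$ onto a quasi-separated algebraic stack $X$, then a subset $E\subseteq |X|$ is constructible if and only if $p^{-1}(E)\subseteq|U|$ is constructible. The ``only if'' direction is immediate from continuity of $p$ in the Zariski and constructible topologies. For the ``if'' direction one exploits that $p$ is open, of finite presentation, and surjective, together with scheme Chevalley applied to the two projections of the groupoid $U\times_X U\rightrightarrows U$, in order to identify globally constructible subsets of $X$ (Zariski-locally) with constructible subsets of $U$ saturated under the equivalence relation.

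For assertion \itemref{TI:Chevalley:image}, since constructibility is a Zariski-local property on $X$, I may assume that $X$ is quasi-compact. Choose a smooth surjection $p\colon U\to X$ from a quasi-compact scheme $U$, and a smooth surjection $q\colon V\to Z\times_X U$ from a scheme $V$. Since $f$ is of finite presentation and finite presentation is smooth-local on source and target, the resulting morphism of schemes $V\to U$ is of finite presentation. By scheme-theoretic Chevalley, its image is constructible in $U$, and this image equals $p^{-1}(f(Z))$. The descent lemma then gives that $f(Z)$ is constructible in $X$.

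For assertion \itemref{TI:Chevalley:cons}, proceed identically. If $E\subseteq|Z|$ is constructible, then its preimage $E_V\subseteq|V|$ is constructible by applying the descent lemma to the smooth cover $V\to Z$. The image of $E_V$ in $U$ under the finite presentation morphism $V\to U$ is then constructible in $U$ by scheme Chevalley, and equals $p^{-1}(f(E))$; a final application of the descent lemma concludes the argument.

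For assertion \itemref{TI:Chevalley:ind-cons}, write $E=\bigcup_i E_i$ as a union of constructible subsets. Each $E_i$ is (Zariski-locally) globally constructible and therefore contained in a quasi-compact open substack $Z_i\subseteq Z$. The restriction $f|_{Z_i}\colon Z_i\to X$ is of finite presentation: it is locally of finite presentation by hypothesis, quasi-compact because $Z_i$ is, and quasi-separated by a standard diagonal diagram chase using that both $X$ and $Z_i$ are quasi-separated (the relative diagonal $\Delta_{Z_i/X}$ factors the absolute diagonal of $Z_i$ through the quasi-separated morphism $Z_i\times_X Z_i\to Z_i\times Z_i$ obtained by base change from $\Delta_X$). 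By part \itemref{TI:Chevalley:cons}, each $f(E_i)$ is constructible in $X$, and hence $f(E)=\bigcup_i f(E_i)$ is ind-constructible. The only substantive work is the descent lemma; everything else is permanence and bookkeeping.
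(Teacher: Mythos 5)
Your approach is genuinely different from the paper's. The paper proves part \itemref{TI:Chevalley:image} directly via flattening stratifications: after reducing to $X$ quasi-compact, one produces a finite filtration by quasi-compact opens over whose successive locally closed differences $f$ becomes flat (existence is checked smooth-locally on $X$, where noetherian approximation plus generic flatness applies), and then uses that flat, finitely presented morphisms are open and quasi-compact to see that $f(Z)$ meets each stratum in a globally constructible set. Part \itemref{TI:Chevalley:cons} is then deduced from \itemref{TI:Chevalley:image} by realizing a constructible subset of an affine atlas as the image of a finitely presented morphism, and part \itemref{TI:Chevalley:ind-cons} from \itemref{TI:Chevalley:cons} via a presentation by a disjoint union of affines. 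Your route instead pivots on smooth descent of constructibility and reduces everything to Chevalley for schemes. What the paper's approach buys is that it bypasses the descent lemma altogether, and in particular never needs to discuss images under the atlas map $p\colon U\to X$ before part \itemref{TI:Chevalley:image} is already in hand; what your approach buys is conceptual uniformity (one reduction principle) at the cost of front-loading a nontrivial descent statement.

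There are two points in your sketch that need more care. First, in the ``if'' direction of the descent lemma you invoke ``scheme Chevalley applied to the two projections of $U\times_X U\rightrightarrows U$,'' but for a quasi-separated algebraic stack $X$ the fiber product $U\times_X U$ is in general only a quasi-separated algebraic space, not a scheme. You therefore need an intermediate step establishing the descent lemma (or Chevalley) for algebraic spaces before it can be used for stacks; this is fillable by a bootstrap but it is not merely bookkeeping, and it is precisely the kind of step the paper's flattening-stratification argument is designed to avoid. Second, in part \itemref{TI:Chevalley:ind-cons} the assertion that a constructible $E_i\subseteq |Z|$ is ``therefore contained in a quasi-compact open substack'' is false as stated when $Z$ is not quasi-compact: constructible means Zariski-locally globally constructible, and no quasi-compactness of $E_i$ follows. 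The fix is a refinement: since quasi-compact opens are retrocompact on the quasi-separated $Z$, for each $x\in E$ one may replace a constructible neighborhood of $x$ in $E$ by its intersection with a quasi-compact open containing $x$, producing a covering of $E$ by constructible sets each lying in a quasi-compact open. With these two repairs your argument goes through.
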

\begin{proof}\footnote{The proof of~\cite[Thm.~5.9.4]{MR1771927} appears to be
    incomplete as only locally closed constructible sets are considered in the
    proof of Cor.~5.9.2 (b).}
The statements are Zariski-local on $X$, so we may assume that $X$ is
quasi-compact. 

\itemref{TI:Chevalley:image}
There is a flattening
stratification of $f$, that is, a sequence of quasi-compact open substacks $\emptyset=U_0\subset U_1\subset U_2\subset\dots\subset U_n=X$ such that $f$ is flat over
$\red{(U_i\setminus U_{i-1})}$.
Indeed, the existence of a flattening stratification can be checked
smooth-locally on $X$. We may thus reduce the question to $X$ affine, and then,
by approximation, to $X$ noetherian.
When $X$ is noetherian, generic flatness produces a flattening
stratification.

Finally, we note that $f(Z)\cap (U_i\setminus U_{i-1})$ is open and
quasi-compact in $(U_i\setminus U_{i-1})$ hence globally constructible. It follows
that $f(Z)$ is globally constructible.

\itemref{TI:Chevalley:cons}
Let $p\colon U\to Z$ be a presentation with $U$ affine. Since $p^{-1}(E)$ is
constructible, there exists a finitely presented morphism $g\colon W\to U$ such
that $g(W)=p^{-1}(E)$. It follows from \itemref{TI:Chevalley:image} that
$f(E)=(f\circ p\circ g)(W)$ is constructible.

\itemref{TI:Chevalley:ind-cons}
Taking a presentation, we may assume that $Z$ is a disjoint union of affine
schemes and the result follows from \itemref{TI:Chevalley:cons}.
\end{proof}

Using Chevalley's theorem we can lift the usual results on the constructible topology
from schemes to quasi-separated algebraic stacks. In particular,
a subset is open (resp.\ closed) in the Zariski topology if and only if it is
C-open and S-open (resp.\ C-closed and S-closed). A subset is constructible
if and only if it is C-open and C-closed.

Arbitrary unions of S-closed sets are S-closed. We can therefore dualize the
S-topology and interchange the roles of closed and open subsets. The (Hochster)
\emph{dual
  topology} $X^*$ on $X$ is the topology where a subset is open if and only if
it is S-closed and C-open.

\begin{lemma}\label{L:dual_top}
Let $X$ be a quasi-compact and quasi-separated algebraic stack. A subset $E\subseteq |X|$ is:
\begin{enumerate}
\item \label{L:dual_top:open} $*$-open if and only if $E$ is a union of constructible closed subsets; and
\item \label{L:dual_top:closed}$*$-closed if and only if $E$ is an intersection of quasi-compact open
  subsets.
\end{enumerate}
\end{lemma}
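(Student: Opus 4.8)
The plan is to reduce everything to the corresponding statements for schemes, which are classical (cf.\ Hochster's thesis and~\cite[\S IV.1.8--9]{EGA}), using the fact already established in this section that Chevalley's theorem lets us transport results on the constructible topology from schemes to quasi-separated algebraic stacks. Recall that $X$ is quasi-compact and quasi-separated, so we may fix a presentation $p\colon U\to X$ with $U$ an affine scheme; $p$ is then surjective, of finite presentation, and (by quasi-separatedness) $U\times_X U$ is quasi-compact. The continuity of $p$ in both the C-topology and the S-topology, together with the fact that $p$ is open in the C-topology (Chevalley, Theorem~\ref{L:chevalley}\itemref{TI:Chevalley:ind-cons}) and surjective, will let us compare the dual topologies on $|X|$ and $|U|$.

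For~\itemref{L:dual_top:open}: a subset $E\subseteq|X|$ is $*$-open iff it is S-closed and C-open. First I would observe that a constructible closed subset is obviously S-closed and C-open, so any union of constructible closed subsets is $*$-open; this is the easy direction and does not even need the presentation. For the converse, suppose $E$ is S-closed and C-open. Pull back to $U$: since $p$ is continuous in both topologies, $p^{-1}(E)$ is S-closed and C-open in $|U|$, i.e.\ $*$-open in the affine scheme $U$. By the classical result for affine schemes (Hochster duality: the dual topology on a spectral space is spectral, and its open sets are exactly unions of quasi-compact closed sets), $p^{-1}(E)=\bigcup_\alpha Z_\alpha$ with each $Z_\alpha$ a closed constructible subset of $|U|$. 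Now I would push forward: $E=p(p^{-1}(E))=\bigcup_\alpha p(Z_\alpha)$ since $p$ is surjective. Each $p(Z_\alpha)$ is constructible by Chevalley (Theorem~\ref{L:chevalley}\itemref{TI:Chevalley:cons}), and it is S-closed because $E$ is S-closed and, crucially, $p$ is a \emph{submersion} for the S-topology\,---\,generization lifts along the finitely presented (hence, after flattening, generically flat) morphism $p$, so $p(Z_\alpha)\subseteq E$ is already closed under the specializations coming from its own points; more carefully, one intersects: $p(Z_\alpha)$ need not be S-closed, but its S-closure is contained in $E$, and replacing $Z_\alpha$ by the (still constructible) preimage of its image one can arrange $p(Z_\alpha)$ to be a closed constructible subset of $|X|$. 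Thus $E$ is a union of closed constructible subsets.

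For~\itemref{L:dual_top:closed}: a subset is $*$-closed iff it is S-open and C-closed, which is the complementary notion to~\itemref{L:dual_top:open}. A quasi-compact open subset is S-open (open sets are generically stable) and C-closed (quasi-compact opens are constructible, hence pro-constructible), and arbitrary intersections of C-closed sets are C-closed while arbitrary intersections of S-open sets are S-open; this gives one inclusion. For the other, if $E$ is S-open and C-closed, then $|X|\setminus E$ is S-closed and C-open, hence by~\itemref{L:dual_top:open} a union of closed constructible subsets $\bigcup_\alpha Z_\alpha$; taking complements, $E=\bigcap_\alpha(|X|\setminus Z_\alpha)$, and each $|X|\setminus Z_\alpha$ is an open subset with constructible (indeed closed-constructible) complement, hence quasi-compact open. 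So $E$ is an intersection of quasi-compact open subsets.

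\textbf{The main obstacle} is the pushforward step in~\itemref{L:dual_top:open}: controlling the behaviour of the S-topology under the presentation $p$. Constructibility transports cleanly by Chevalley, but ``S-closedness'' is about specialization/generization, and the image of an S-closed set under a surjective finitely presented morphism need not be S-closed in general\,---\,one really uses that $E$ itself is S-closed and that $p^{-1}(E)$ is a union of closed constructible pieces to run a slightly delicate argument replacing each piece by the preimage of its scheme-theoretic image, so that its image becomes a genuinely closed constructible subset of $|X|$ contained in $E$. Once this compatibility is pinned down, the rest is a formal exercise in complementation and the classical affine-scheme case. An alternative, cleaner route that avoids pushing sets around is to prove directly that $|X|$ is a spectral space (it is, being a quotient of the spectral space $|U|$ by a ``finite-type'' equivalence relation, and this is standard for qcqs algebraic stacks) and then invoke Hochster's characterization of the dual topology on an arbitrary spectral space verbatim; I would mention this as the conceptually preferred argument and fall back on the presentation-based proof only for the facts about constructible sets already developed in this section.
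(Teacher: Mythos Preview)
Your approach is genuinely different from the paper's, and the difference is instructive. The paper proves \itemref{L:dual_top:closed} first, by a very short direct argument: if $E$ is S-open and C-closed, then $E$ is stable under generizations, hence (using that $|X|$ is sober) equal to the intersection of all open subsets containing it; and $E$ is C-closed in the C-compact space $|X|$, hence quasi-compact, so each of those opens can be shrunk to a quasi-compact open still containing $E$. Part \itemref{L:dual_top:open} then follows by complementation, since a closed subset is constructible if and only if its complement is quasi-compact open. No presentation, no Chevalley, no pushforward.

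You instead attack \itemref{L:dual_top:open} first via the presentation $p\colon U\to X$, and this is where the real difficulty sits. The pushforward step does not go through as written: you correctly observe that $p(Z_\alpha)$ is constructible by Chevalley, but it need not be S-closed, and your proposed repair (``replace $Z_\alpha$ by the preimage of its image'') does not produce a closed constructible subset of $|X|$---the preimage of a constructible set is constructible, but there is no reason it, or its image, should be \emph{closed}. What one would actually need is that the Zariski closure of each constructible piece $p(Z_\alpha)$ is again constructible (its closure does lie in $E$ since $|X|$ is sober and $E$ is S-closed), but in a general spectral space the closure of a constructible set is not constructible, so this line of argument stalls.

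Your alternative route---observe that $|X|$ is spectral and invoke Hochster duality verbatim---is correct, and is in spirit exactly what the paper does; but the paper's direct proof of \itemref{L:dual_top:closed} makes the relevant spectral-space facts completely explicit and is shorter than either of your proposed arguments. The moral: the two parts are formally equivalent by complementation, but \itemref{L:dual_top:closed} is the one with the clean direct proof, because ``S-open $\Rightarrow$ intersection of opens'' plus ``C-closed $\Rightarrow$ quasi-compact'' is immediate, whereas ``S-closed $\Rightarrow$ union of closeds'' does not combine well with ``C-open'' to force constructibility of the pieces.
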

\begin{proof}
A closed subset is constructible if and only if its complement is
quasi-compact.  It is thus enough to prove \itemref{L:dual_top:closed}. A quasi-compact open subset is
S-open and C-closed, hence so is any intersection. Conversely, if $E$ is S-open
and C-closed, then $E$ is stable under generizations and quasi-compact. The
former implies that $E$ is an intersection of open subsets $E_\alpha$. The
latter implies that we may assume that the $E_\alpha$'s are quasi-compact.
\end{proof}

The $*$-open subsets are also known as \emph{Thomason} subsets. We let $\Tho(X)$ denote the set of Thomason subsets of $X$.

Let $f\colon X\to Y$ be a continuous map between topological spaces. If $f$ is
surjective and either open or closed, then $f$ is \emph{submersive}, i.e., a
subset $E\subseteq Y$ is open if and only if $f^{-1}(E)\subseteq X$ is
open. Thus, if $f\colon X\to Y$ is a morphism of quasi-separated algebraic
stacks that is
\begin{enumerate}
\item surjective,
\item either S-open or S-closed, and
\item either C-open or C-closed,
\end{enumerate}
then $f$ is submersive in both the S-topology and the C-topology, hence also
in the Zariski topology and the dual topology.
Examples of such morphisms are those that are either:
\begin{itemize}
\item proper and surjective, or
\item faithfully flat and locally of finite presentation (Chevalley's Theorem).
\end{itemize}
Another important class of morphisms that are S-submersive and C-submersive
are \emph{subtrusive} morphisms~\cite{MR2679038}.

\begin{lemma}\label{L:ff-descent-of-Thomason}
Let $f\colon X'\to X$ be a morphism of quasi-separated algebraic stacks
that is S-submersive and C-submersive, e.g., $f$ is
faithfully flat and locally of finite presentation. Then $f$ is $*$-submersive.
In particular, if $X''=X'\times_X X'$, then the following sequence of sets is equalizing:
\[
\equalizer{\Tho(X)}{\Tho(X')}{\Tho(X'').}
\]
\end{lemma}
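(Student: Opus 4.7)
My plan is to handle the two claims separately.

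For the first claim, I would unpack the definition of the $*$-topology: a subset $E \subseteq |X|$ is $*$-open precisely when it is both S-closed and C-open. The S-submersivity of $f$, after passing to complements, gives that $E$ is S-closed if and only if $f^{-1}(E)$ is S-closed; similarly C-submersivity gives that $E$ is C-open if and only if $f^{-1}(E)$ is C-open. Intersecting these two equivalences yields the desired $*$-submersivity. This step is purely formal.

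For the equalizer, I would first observe that $f^{-1}\colon \Tho(X) \to \Tho(X')$ is well-defined by continuity of $f$ in the $*$-topology, and injective since $f$ is surjective on points. The two composites $\Tho(X)\to \Tho(X'')$ obviously agree because $f\pi_1=f\pi_2$. The substantive step is to show that every $E'\in \Tho(X')$ satisfying $\pi_1^{-1}(E')=\pi_2^{-1}(E')$ descends to some (necessarily unique) $E\in \Tho(X)$.

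The natural candidate is $E := f(E')$. The hypothesis on $E'$, combined with the surjectivity of the canonical map $|X'\times_X X'|\to |X'|\times_{|X|}|X'|$, is equivalent to saying that $E'$ is $f$-saturated: whenever $f(x'_1)=f(x'_2)$, one has $x'_1\in E'$ if and only if $x'_2\in E'$. Saturation then forces $f^{-1}(E)=E'$, and the first part gives $E\in \Tho(X)$.

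The main technical input is the surjectivity of $|X'\times_X X'|\to |X'|\times_{|X|}|X'|$. For schemes and algebraic spaces this is standard, obtained by choosing a common residue-field extension of two points sharing a common image; the stack case reduces to this after choosing smooth presentations, and I would cite rather than reprove this well-known fact.
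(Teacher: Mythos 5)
Your proof is correct and takes essentially the same route as the paper: establish $*$-submersivity by intersecting the S- and C-submersivity equivalences, then obtain the equalizer from the surjectivity of $f$ and of $|X''|\to|X'|\times_{|X|}|X'|$ together with $*$-submersivity. The only cosmetic difference is that the paper first records the equalizer for full power sets $\Subsets(|X|)\to\Subsets(|X'|)\rightrightarrows\Subsets(|X''|)$ and then restricts to Thomason subsets via submersivity, whereas you carry out the descent of a single saturated Thomason subset $E'$ directly.
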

\begin{proof}
$f$ is $*$-submersive by the definition of the dual topology.
Since $f$ is surjective and $|X''|\to |X'|\times_{|X|} |X'|$ is surjective,
the sequence
\[
\equalizer{\Subsets(|X|)}{\Subsets(|X'|)}{\Subsets(|X''|)}
\]
of the sets of all subsets is equalizing. Since $f$ is $*$-submersive, it
follows that $E\subseteq |X|$ is $*$-open, i.e., Thomason, if and only if
$f^{-1}(E)$ is $*$-open.
\end{proof}

\begin{remark}
Spectra of quasi-compact and quasi-separated schemes and stacks are spectral
spaces~\cite{MR0251026}. Stone duality provides an equivalence of categories
between
\begin{enumerate}
\item spectral spaces and spectral maps;
\item coherent frames and coherent maps; and
\item bounded distributive lattices and bounded lattice maps.
\end{enumerate}
Given a spectral space $X$, the corresponding coherent frame is the lattice
$\Op(X)$ of open subsets and the corresponding bounded distributive lattice is
the lattice $\Op_\qc(X)$ of quasi-compact open subsets. Hochster duality takes
the lattice $\Op_\qc(X)$ to the opposite lattice $\Cl_\cons(X)$ of closed and
constructible subsets, the coherent frame $\Op(X)$ to the coherent frame
$\Tho(X)$ and the spectral space $X$ to the spectral space $X^*$.

If $f\colon X'\to X$ is a submersive morphism of spectral spaces, then
\[
\coequalizer{X'\times_X X'}{X'}{X}
\]
is a coequalizer in the category of topological spaces, hence also a
coequalizer in the category of spectral spaces. By Stone and Hochster duality,
we obtain a coequalizer
\[
\coequalizer{X'^*\times_{X^*} X'^*}{X'^*}{X^*}
\]
in the category of spectral spaces. However, this need not be a coequalizer
in the category of topological spaces, that is, $f^*\colon X'^*\to X^*$ need
not be submersive. Indeed, there exists a universally submersive morphism
$f\colon X'\to X$ of affine schemes that is neither S-submersive, nor
$*$-submersive. This example will appear in a forthcoming paper.
\end{remark}

\section{The classification of thick tensor ideals for algebraic stacks}
Let $X$ be a quasi-compact and quasi-separated algebraic stack. In this section, we extend one of the main results of \cite[Thm.~1]{hall_balmer_cms} on the classification of thick tensor ideals in $\DQCOH(X)^c$ to include a number of stacks with infinite stabilizers. So let $X$ be an algebraic stack. It will be convenient to set
\[
\thick(X) := \thick(\DQCOH(X)^c)
\]
Recall that if $M \in \DQCOH(X)$, then 
\[
\supph(M) = \cup_{n\in \Z} \supp(\COHO{n}(M)) \subseteq |X|.
\]
Moreover, if $P$ is a perfect complex on $X$, then $\supph(P)$ is a closed constructible subset of $|X|$ \cite[Lem.~4.8(3)]{perfect_complexes_stacks}. 
\begin{definition}
Let $X$ be a quasi-compact and quasi-separated algebraic stack. We say that $X$:
\begin{enumerate}
\item is \emph{tensor supported} if for every
  $P,Q\in \DQCOH(X)^c$ such that $\supph(P)\subseteq \supph(Q)$ it
  holds that
  $\langle P\rangle_\otimes\subseteq \langle
  Q\rangle_\otimes$;
\item is \emph{compactly supported} if for every closed
  constructible subset $Z$, there exists a compact object
  $P\in\DQCOH(X)^c$ with $\supph(P)=Z$; and 
\item satisfies the \emph{Thomason condition} if it is compactly supported and $\DQCOH(X)$ is compactly generated.
\end{enumerate}
\end{definition}
\begin{example}\label{E:tensor_supp}
  Quasi-compact and quasi-separated schemes are tensor supported \cite[Lem.~3.14]{MR1436741}. So
  are quasi-compact algebraic stacks with quasi-finite and separated
  diagonals~\cite[Lem.~3.1]{hall_balmer_cms}. These results were proved using the ``Tensor 
  Nilpotence Theorem with parameters''.
\end{example}
\begin{example}\label{E:compact_supp}
  Quasi-compact and quasi-separated schemes satisfy the Thomason condition. So do 
  quasi-compact algebraic 
  stacks with quasi-finite and separated diagonal \cite[Thm.~A]{perfect_complexes_stacks}. If $X$ is a 
  $\Q$-stack that \'etale-locally is a quotient stack, 
  then $X$ satisfies the Thomason condition. This is all discussed in detail in 
  \cite{perfect_complexes_stacks} (and see the list in the Introduction to the present article).
\end{example}
Recall the following: given a subset $E \subseteq |X|$ there is a subcategory
\[
\mathcal{I}_X(E) = \{ P \in \DQCOH(X)^c \suchthat \supph(P) \subseteq E \}.
\]
It is readily seen that $\mathcal{I}_X(E)$ is a thick tensor ideal of $\DQCOH(X)^c$. Also if $\mathscript{C} \subseteq \DQCOH(X)^c$ is a thick tensor ideal of $\DQCOH(X)^c$, then let 
\[
\varphi_X(\mathscript{C}) = \cup_{P \in \mathscript{C}} \supph(P) \subseteq |X|.
\]
Since every compact object of $\DQCOH(X)$ is perfect \cite[Lem.~4.4(1)]{perfect_complexes_stacks}, $\varphi_X(\mathscript{C})$ is always a Thomason subset of $|X|$. The following simple lemma relates these notions.
\begin{lemma}\label{L:tens_comp}
  Let $X$ be a quasi-compact and quasi-separated algebraic stack.
  The maps
\[
\xymatrix{\Tho(X)\ar@<0.5ex>[r]^-{\smash{\mathcal{I}_X}} & \ar@<0.5ex>[l]^-{\varphi_X} \thick(X) }
\]
  are adjoint functors of posets.

  \begin{enumerate}
  \item $X$ is tensor supported if and only if $\mathcal{I}_X \circ 
    \varphi_X = \ID{}$. 
    Equivalently, either $\varphi_X$ is injective or $\mathcal{I}_X$ is surjective.
    \item $X$ is compactly supported if and only if $\varphi_X \circ \mathcal{I}_X = \ID{}$. Equivalently, either $\varphi_X$ is surjective
      or $\mathcal{I}_X$ is injective.
  \end{enumerate}
\end{lemma}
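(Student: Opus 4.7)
The plan is in three steps. First, I would establish the Galois connection: unwinding definitions, $\varphi_X(\mathscript{C})\subseteq E$ exactly when every $P\in\mathscript{C}$ satisfies $\supph(P)\subseteq E$, i.e., $\mathscript{C}\subseteq\mathcal{I}_X(E)$. Standard formalities for Galois connections between posets then give the identities $\mathcal{I}_X\varphi_X\mathcal{I}_X=\mathcal{I}_X$ and $\varphi_X\mathcal{I}_X\varphi_X=\varphi_X$, from which the equivalences in the ``Equivalently\ldots'' clauses (composition is identity $\iff$ $\varphi_X$ injective $\iff$ $\mathcal{I}_X$ surjective, and dually for (2)) follow mechanically.

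For (1), I would show ``tensor supported $\Rightarrow\mathcal{I}_X\circ\varphi_X=\ID{}$'' as follows. Fix $\mathscript{C}\in\thick(X)$ and $P\in\mathcal{I}_X(\varphi_X(\mathscript{C}))$. The closed constructible subset $\supph(P)$ is a quasi-compact open in the Hochster dual topology on $|X|$ (by Lemma \ref{L:dual_top}), and it is covered there by the $*$-opens $\supph(Q)$ for $Q\in\mathscript{C}$; hence a finite subcover yields $\supph(P)\subseteq\supph(Q_1\oplus\cdots\oplus Q_n)$ with $Q_i\in\mathscript{C}$. Tensor support then gives $\langle P\rangle_\otimes\subseteq\langle Q_1\oplus\cdots\oplus Q_n\rangle_\otimes$; intersecting with $\DQCOH(X)^c$ and applying Corollary \ref{C:tensor_inflation} identifies the right-hand side with the thick tensor ideal of $\DQCOH(X)^c$ generated by $Q_1\oplus\cdots\oplus Q_n$, which lies in $\mathscript{C}$. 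The converse is immediate: if $\mathcal{I}_X\varphi_X=\ID{}$ and $\supph(P)\subseteq\supph(Q)$, apply the identity to $\mathscript{C}:=\langle Q\rangle_\otimes\cap\DQCOH(X)^c$ to place $P$ in $\mathscript{C}$, whence $\langle P\rangle_\otimes\subseteq\langle Q\rangle_\otimes$.

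For (2), compactly supported yields $\varphi_X\circ\mathcal{I}_X=\ID{}$ at once: write $E\in\Tho(X)$ as $\bigcup_\alpha Z_\alpha$ with $Z_\alpha$ closed constructible (Lemma \ref{L:dual_top}\itemref{L:dual_top:open}), pick compacts $P_\alpha$ with $\supph(P_\alpha)=Z_\alpha$, note $P_\alpha\in\mathcal{I}_X(E)$, and conclude $E\subseteq\varphi_X(\mathcal{I}_X(E))$. Conversely, given closed constructible $Z$, the identity $\varphi_X\mathcal{I}_X=\ID{}$ expresses $Z$ as $\bigcup_{P\in\mathcal{I}_X(Z)}\supph(P)$; dual quasi-compactness of $Z$ selects finitely many $P_1,\ldots,P_n\in\mathcal{I}_X(Z)$ whose direct sum is a compact object with support exactly $Z$.

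The hard part---or at least the only substantive geometric input---is the quasi-compactness step in the Hochster dual topology: it is what lets one pass from an arbitrary Thomason union of supports to a finite union, and hence to a single compact object built by finite direct sum inside the thick subcategory under consideration. Everything else is formal Galois-connection bookkeeping together with the observation that $\mathscript{C}$ and $\DQCOH(X)^c$ are both closed under finite direct sums.
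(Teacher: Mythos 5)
Your proof is correct and, at the level of strategy, matches the paper's: set up the Galois connection, then translate tensor/compact support into the two composite identities. The paper dispatches the latter translations with ``it is now easily seen,'' whereas you spell out the one genuinely non-formal point---that a closed constructible subset is quasi-compact in the Hochster dual topology, so a Thomason cover $\supph(P)\subseteq\bigcup_{Q\in\mathscript{C}}\supph(Q)$ refines to a finite one and hence to a single $\supph(Q_1\oplus\cdots\oplus Q_n)$ with $Q_1\oplus\cdots\oplus Q_n\in\mathscript{C}$. That observation, together with the closure of thick subcategories under finite direct sums, is exactly what makes the ``easily seen'' equivalences go through, and you correctly identify it as the only substantive geometric input. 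The only thing to flag is a small mismatch you inherit from the paper itself: the definition of ``tensor supported'' is phrased via localizing tensor ideals $\langle-\rangle_\otimes$ in $\DQCOH(X)$, and your step invoking Corollary~\ref{C:tensor_inflation} to identify $\langle Q\rangle_\otimes\cap\DQCOH(X)^c$ with the thick tensor ideal generated by $Q$ uses compact generation of $\DQCOH(X)$, which Lemma~\ref{L:tens_comp} does not hypothesize. In every application the Thomason condition (hence compact generation) is in force, so nothing breaks, but it would be cleaner to either add that hypothesis or read the definition of tensor supported at the level of thick tensor ideals in $\DQCOH(X)^c$ directly.
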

\begin{proof}
  Let $E\in\Tho(X)$, then clearly $E\supseteq \varphi_X(\mathcal{I}_X(E))$.
  Conversely, if $\mathscript{C}\in\thick(X)$, then $\mathscript{C}\subseteq
  \mathcal{I}_X(\varphi_X(\mathscript{C}))$. Moreover,
  $\varphi_X\circ \mathcal{I}_X\circ \varphi_X=\varphi_X$ and
  $\mathcal{I}_X\circ \varphi_X\circ \mathcal{I}_X=\mathcal{I}_X$. In particular, 
  $\varphi$ and $\mathcal{I}$ are adjoint functors of posets, a so-called
  Galois connection.

  It is now easily seen that (1) $X$ is tensor supported if and only if $\varphi_X$ is injective; and that (2) $X$ is compactly supported if and only if $\varphi_X$ is surjective on 
  Thomason subsets. 
\end{proof}
\begin{lemma}\label{L:more_thick_commuting}
Let $f\colon X'\to X$ be a concentrated morphism between quasi-compact and quasi-separated algebraic stacks. Then in the diagram
\[
\xymatrix{\Tho(X) \ar[d]_{f^{-1}}\ar@<0.5ex>[r]^-{\smash{\mathcal{I}_X}} & \ar@<0.5ex>[l]^-{\varphi_X} \thick(X) \ar[d]^{\thick((\LDERF \QCPBK{f})^c)}\\
\Tho(X') \ar@<0.5ex>[r]^-{\mathcal{I}_{X'}} & \ar@<0.5ex>[l]^-{\varphi_{X'}} \thick(X') }
\]
\begin{enumerate}
\item the square with $\varphi$ always commutes; and
\item the square with $\mathcal{I}$ commutes when $X'$ is
  tensor supported and $X$ is compactly supported.
\end{enumerate}
\end{lemma}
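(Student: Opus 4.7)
The plan rests on two basic facts. First, for any concentrated morphism $f\colon X'\to X$ and any $P \in \DQCOH(X)^c$, one has the support identity $\supph(\LDERF\QCPBK{f}(P)) = f^{-1}(\supph(P))$; this is checked pointwise by factoring the derived pullback of $P$ to a residue field of $X'$ through the residue field of its image in $X$, and combining this with the closed-constructible description of $\supph$ for perfect complexes. Second, for any subset $E'\subseteq |X'|$, the collection of compact objects with support contained in $E'$ is a thick tensor ideal, since supports are subadditive on triangles, stable under direct summands, and contract under tensoring. Note also that $f^{-1}$ sends Thomason subsets to Thomason subsets, because $f$ is continuous in both the constructible and specialization topologies, so the left vertical arrow is well-defined.

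For part (1), let $\mathscript{C}\in\thick(X)$ and set $\mathscript{D}=\thick((\LDERF\QCPBK{f})^c)(\mathscript{C})$; I must show $\varphi_{X'}(\mathscript{D}) = f^{-1}(\varphi_X(\mathscript{C}))$. The inclusion $\supseteq$ is immediate from the support identity applied to the generators $\LDERF\QCPBK{f}(P)\in\mathscript{D}$ for $P\in\mathscript{C}$. For the reverse inclusion, I observe that $\mathcal{I}_{X'}(f^{-1}(\varphi_X(\mathscript{C})))$ is itself a thick tensor ideal and contains every $\LDERF\QCPBK{f}(P)$ with $P\in \mathscript{C}$; hence it contains $\mathscript{D}$. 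Applying $\varphi_{X'}$ and using the Galois-connection inequality $\varphi_{X'}\circ\mathcal{I}_{X'}\le\ID{}$ established in Lemma~\ref{L:tens_comp}, this yields $\varphi_{X'}(\mathscript{D})\subseteq f^{-1}(\varphi_X(\mathscript{C}))$.

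For part (2), fix a Thomason subset $E\subseteq|X|$; I must show $\mathcal{I}_{X'}(f^{-1}(E))=\thick((\LDERF\QCPBK{f})^c)(\mathcal{I}_X(E))$. The inclusion $\supseteq$ again follows from the support identity together with the thick-tensor-ideal nature of $\mathcal{I}_{X'}(f^{-1}(E))$. For the reverse inclusion, I write $E=\bigcup_\alpha Z_\alpha$ as a union of closed constructible subsets and use the compactly supported hypothesis on $X$ to choose $P_\alpha \in \DQCOH(X)^c$ with $\supph(P_\alpha)=Z_\alpha$; each such $P_\alpha$ lies in $\mathcal{I}_X(E)$. Let $\mathscript{J}$ be the thick tensor ideal of $\DQCOH(X')^c$ generated by $\{\LDERF\QCPBK{f}(P_\alpha)\}$. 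By construction $\mathscript{J}\subseteq \thick((\LDERF\QCPBK{f})^c)(\mathcal{I}_X(E))$, and combining the support identity with the first-direction inclusion one computes $\varphi_{X'}(\mathscript{J}) = \bigcup_\alpha f^{-1}(Z_\alpha) = f^{-1}(E)$. Now the tensor-supportedness of $X'$ gives $\mathcal{I}_{X'}\circ\varphi_{X'}=\ID{}$ by Lemma~\ref{L:tens_comp}, so $\mathcal{I}_{X'}(f^{-1}(E))=\mathcal{I}_{X'}(\varphi_{X'}(\mathscript{J}))=\mathscript{J}$, which is contained in $\thick((\LDERF\QCPBK{f})^c)(\mathcal{I}_X(E))$ as desired.

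The only step requiring any care is the support-pullback identity in the first paragraph; everything afterward is a combination of the support formalism with the Galois-connection machinery of Lemma~\ref{L:tens_comp}. The compactly supported hypothesis is used precisely to realize each stratum $Z_\alpha$ of $E$ by the support of a compact object upstairs, so that its pullback generates enough of $\mathcal{I}_{X'}(f^{-1}(E))$; the tensor-supported hypothesis on $X'$ then converts this "generating" statement into an equality.
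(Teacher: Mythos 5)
Your proof is correct and uses the same essential ingredients as the paper: the support identity $\supph(\LDERF\QCPBK{f}P)=f^{-1}(\supph P)$ together with the Galois-connection properties of $(\mathcal{I},\varphi)$ from Lemma~\ref{L:tens_comp}. The one place the paper is slicker is part (2): there it is observed that the identity $\mathcal{I}_{X'}\circ f^{-1}=\thick((\LDERF\QCPBK{f})^c)\circ\mathcal{I}_X$ is a purely formal consequence of part (1) --- pre-compose the relation $\varphi_{X'}\circ\thick((\LDERF\QCPBK{f})^c)=f^{-1}\circ\varphi_X$ with $\mathcal{I}_X$ and post-compose with $\mathcal{I}_{X'}$, then invoke $\varphi_X\circ\mathcal{I}_X=\ID{}$ (compactly supported $X$) and $\mathcal{I}_{X'}\circ\varphi_{X'}=\ID{}$ (tensor supported $X'$) --- so there is no need to construct the auxiliary ideal $\mathscript{J}$ by hand. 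Your direct argument for (2) is nevertheless valid, and it uses the hypotheses in exactly the same places; it just re-derives what the lattice formalism hands you for free.
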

\begin{proof}
Note that since $f$ is concentrated, the right vertical map is well-defined
(Example~\ref{E:conc_pbk}).
For (1), note that $\supph(\LDERF f^*\cplx{P})\otimes \cplx{Q}\subseteq
\supph(\LDERF f^*\cplx{P})=f^{-1}(\supph \cplx{P})$ and similarly for direct summands. Also, 
(2) follows from (1) since $\varphi_X\circ \mathcal{I}_X=\ID{}$ and
$\mathcal{I}_{X'} \circ \varphi_{X'} = \ID{}$ (Lemma \ref{L:tens_comp}).
\end{proof}
We now come to the main result of this section.
\begin{theorem}\label{T:classification_thick}
Let $X$ be a quasi-compact and quasi-separated algebraic stack. 
If $X$ satisfies the Thomason condition, then $X$ is tensor supported. In particular, 
\[
\xymatrix{\Tho(X) \ar@<0.5ex>[r]^-{\smash{\mathcal{I}_X}} & \ar@<0.5ex>[l]^-{\varphi_X} \thick(X)}
\]
are mutually inverse. 
\end{theorem}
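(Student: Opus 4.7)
The plan is to reduce to the case of an affine scheme using faithfully flat descent together with the idempotent-based injectivity results of Section~\ref{S:injectivity}. Since $X$ satisfies the Thomason condition, it is in particular compactly supported, so by Lemma~\ref{L:tens_comp} it is enough to prove that $\varphi_X$ is injective, i.e., that $X$ is tensor supported; the ``mutually inverse'' statement then follows from the other half of Lemma~\ref{L:tens_comp}.

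Choose a smooth faithfully flat presentation $p \colon U \to X$ with $U$ an affine scheme; such a $p$ exists since $X$ is quasi-compact and quasi-separated. Then $p$ is concentrated, so $\LDERF \QCPBK{p}$ preserves small coproducts and compact objects (Example~\ref{E:conc_pbk}); it is moreover a tensor triangulated functor between rigidly compactly generated tensor triangulated categories (the compact generation of $\DQCOH(X)$ is part of the Thomason condition, that of $\DQCOH(U)$ is standard, and compact objects on either side are perfect and hence rigid). Finally, $\LDERF \QCPBK{p}$ is conservative: if $\LDERF \QCPBK{p} \cplx{M} = 0$, then flatness of $p$ gives $p^* \COHO{n}(\cplx{M}) = \COHO{n}(\LDERF \QCPBK{p}\cplx{M}) = 0$ for all $n$, and faithful flatness forces $\COHO{n}(\cplx{M}) = 0$. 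Corollary~\ref{C:thick_injective} then yields that $\thick((\LDERF \QCPBK{p})^c) \colon \thick(X) \to \thick(U)$ is injective.

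Now consider the square of Lemma~\ref{L:more_thick_commuting} involving $\varphi$, whose vertical maps are $p^{-1} \colon \Tho(X) \to \Tho(U)$ and $\thick((\LDERF \QCPBK{p})^c)$. The scheme $U$ is tensor supported (Example~\ref{E:tensor_supp}) and compactly supported (Example~\ref{E:compact_supp}), so by Lemma~\ref{L:tens_comp} the map $\varphi_U$ is bijective, in particular injective. Moreover, $p$ is faithfully flat and locally of finite presentation, so $p^{-1}$ is injective by Lemma~\ref{L:ff-descent-of-Thomason}.

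A diagram chase concludes the proof. Suppose $\mathscript{C}, \mathscript{C}' \in \thick(X)$ satisfy $\varphi_X(\mathscript{C}) = \varphi_X(\mathscript{C}')$. Applying $p^{-1}$ and using commutativity gives
\[
\varphi_U\bigl(\thick((\LDERF \QCPBK{p})^c)(\mathscript{C})\bigr) = \varphi_U\bigl(\thick((\LDERF \QCPBK{p})^c)(\mathscript{C}')\bigr).
\]
Injectivity of $\varphi_U$ yields $\thick((\LDERF \QCPBK{p})^c)(\mathscript{C}) = \thick((\LDERF \QCPBK{p})^c)(\mathscript{C}')$, and then injectivity of $\thick((\LDERF \QCPBK{p})^c)$ forces $\mathscript{C} = \mathscript{C}'$. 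The main technical input is Corollary~\ref{C:thick_injective}; the conceptual content is that the idempotent formalism propagates tensor support along a conservative pullback, so that tensor support on a smooth affine cover descends to $X$.
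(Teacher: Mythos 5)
Your proof is correct and takes essentially the same approach as the paper: reduce to an affine smooth cover $p\colon U\to X$, note that $\thick((\LDERF\QCPBK{p})^c)$ is injective by Corollary~\ref{C:thick_injective} (via conservativity of $\LDERF\QCPBK{p}$), and conclude by chasing the commuting $\varphi$-square of Lemma~\ref{L:more_thick_commuting}. One small remark: the injectivity of $p^{-1}\colon\Tho(X)\to\Tho(U)$ from Lemma~\ref{L:ff-descent-of-Thomason} that you invoke is never actually used in your diagram chase (you apply $p^{-1}$ to an equality, which requires nothing).
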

\begin{proof}
  By Lemma \ref{L:tens_comp}, it remains to prove that $\mathcal{I}_X \circ \varphi_X = 
  \ID{}$. Since $X$ is quasi-compact, there exists a smooth and surjective morphism $f\colon X' 
  \to X$, where $X'$ is an affine scheme. By \cite[Lem.~A.3]{MR1174255}, $X'$ is tensor supported.  The quasi-separatedness of $X$ guarantees that 
  $f$ is quasi-compact, quasi-separated, and representable; in particular, concentrated 
  \cite[Lem.~2.5(3)]{perfect_complexes_stacks}. By Lemma 
  \ref{L:more_thick_commuting}, all squares in the diagram:
  \[
  \xymatrix{\Tho(X) \ar[d]_{f^{-1}}\ar@<0.5ex>[r]^-{\smash{\mathcal{I}_X}} & \
    \ar@<0.5ex>[l]^-{\varphi_X} \thick(X) \ar[d]^{\thick((\LDERF f^*)^c)}\\
    \Tho(X') \ar@<0.5ex>[r]^-{\mathcal{I}_{X'}} & \ar@<0.5ex>[l]^-{\varphi_{X'}} \thick(X') }
  \]
  commute. Hence,
  \[
 \thick((\LDERF f^*)^c)\circ \mathcal{I}_X \circ \varphi_X = \mathcal{I}_{X'} \circ (f^{-1}) \circ \varphi_X  = \mathcal{I}_{X'} \circ \varphi_{X'} \circ \thick((\LDERF f^*)^c) = \thick((\LDERF f^*)^c),
  \]
  with the last equality provided by Lemma \ref{L:tens_comp}. But $\LDERF \QCPBK{f}$ preserves compacts and is conservative, so  
  Corollary \ref{C:thick_injective} implies that $\thick((\LDERF f^*)^c)$ is injective. Hence, 
  $\mathcal{I}_X \circ \varphi_X = \ID{}$ as required.
\end{proof}
We can now prove Theorem \ref{main:balmer_concentrated}, which determines the Balmer spectrum of concentrated stacks that satisfy the Thomason condition.
\begin{proof}[Proof of Theorem \ref{main:balmer_concentrated}]
  The proof of \cite[Thm.~1.2]{hall_balmer_cms} applies verbatim using
  Theorem~\ref{T:classification_thick}.
\end{proof}

\section{The tensor triangulated telescope conjecture for algebraic stacks}
The main result of this section is that a large class of noetherian algebraic stacks satisfy the tensor triangulated telescope conjecture. We will prove a more general result, which we expect to be of use in the non-noetherian situation. 

Some notation: if $X$ is a quasi-compact and quasi-separated stack, let 
\[
\smashing(X) := \smashing(\DQCOH(X)) \quad \mbox{and} \quad \infl_X := \infl_{\DQCOH(X)} \colon \thick(X) \to \smashing(X).
\]
Also, we will say that $X$ is \emph{telescoping} if the tensor triangulated telescope conjecture holds for $\DQCOH(X)$, that is, $\infl_X$ is bijective.

\begin{theorem}\label{T:smashing_local}
  Let $f\colon X'\to X$ be a faithfully flat and concentrated morphism of
  finite presentation between quasi-compact and quasi-separated algebraic
  stacks that satisfy the Thomason condition. If $X'$ is telescoping, then $X$
  is telescoping.
\end{theorem}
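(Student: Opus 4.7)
The plan is to exploit the identification $\Tho(X)\cong\thick(X)$ from Theorem \ref{T:classification_thick} (applicable since $X$ satisfies the Thomason condition) in order to descend smashing tensor ideals along the cover $f$. Injectivity of $\infl_X$ is automatic from Corollary \ref{C:tensor_inflation}, so the whole content of the theorem is surjectivity: given $\mathscript{S}\in\smashing(X)$, I must produce a thick tensor ideal $\mathscript{C}\in\thick(X)$ with $\infl_X(\mathscript{C})=\mathscript{S}$.

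I would first construct a candidate upstairs. Set $\mathscript{S}':=\smashing(\LDERF f^*)(\mathscript{S})\in\smashing(X')$; since $X'$ is telescoping, there is a unique $\mathscript{C}'\in\thick(X')$ with $\infl_{X'}(\mathscript{C}')=\mathscript{S}'$, and by Theorem \ref{T:classification_thick} applied to $X'$ it corresponds to the Thomason subset $E':=\varphi_{X'}(\mathscript{C}')\in\Tho(X')$. The plan is to show that $E'$ descends to some $E\in\Tho(X)$ and then take $\mathscript{C}:=\mathcal{I}_X(E)$.

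The core step is the descent of $E'$ through Lemma \ref{L:ff-descent-of-Thomason}. Let $X'':=X'\times_X X'$ with projections $p_1,p_2\colon X''\to X'$; each $p_i$ is concentrated and faithfully flat of finite presentation as a base change of $f$. Applying Theorem \ref{T:infl_smash_func} to $p_1$ and $p_2$ and using $fp_1=fp_2$, both thick ideals $\thick((\LDERF p_i^*)^c)(\mathscript{C}')$ inflate to the same smashing ideal $\smashing(\LDERF(fp_i)^*)(\mathscript{S})$ on $X''$. Injectivity of $\infl_{X''}$ from Corollary \ref{C:tensor_inflation}, which crucially holds \emph{without} any Thomason hypothesis on $X''$, then forces the two thick ideals to coincide, and Lemma \ref{L:more_thick_commuting}(1) translates this equality into $p_1^{-1}(E')=p_2^{-1}(E')$. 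Lemma \ref{L:ff-descent-of-Thomason} now yields $E\in\Tho(X)$ with $f^{-1}(E)=E'$.

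Finally, set $\mathscript{C}:=\mathcal{I}_X(E)$, so $\varphi_X(\mathscript{C})=E$. Lemma \ref{L:more_thick_commuting}(1) combined with injectivity of $\varphi_{X'}$ (tensor-supportedness of $X'$, from Theorem \ref{T:classification_thick}) gives $\thick((\LDERF f^*)^c)(\mathscript{C})=\mathscript{C}'$, so Theorem \ref{T:infl_smash_func} yields
\[
\smashing(\LDERF f^*)(\infl_X(\mathscript{C}))=\infl_{X'}(\mathscript{C}')=\mathscript{S}'=\smashing(\LDERF f^*)(\mathscript{S}).
\]
Since $f$ is faithfully flat, $\LDERF f^*$ is conservative, so Theorem \ref{T:idempotent_injective} makes $\smashing(\LDERF f^*)$ injective, and we conclude $\infl_X(\mathscript{C})=\mathscript{S}$. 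The main obstacle is precisely the descent step on $X''$: one cannot assume $X''$ satisfies the Thomason condition, so Theorem \ref{T:classification_thick} is unavailable there, and the entire argument hinges on the fact that unconditional injectivity of $\infl_{X''}$ suffices to collapse the two candidate pullbacks.
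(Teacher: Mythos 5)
Your argument follows the same descent strategy as the paper and almost every step is correct, but there is one genuine gap: you work directly with $X'' := X'\times_X X'$, and at two points (applying Theorem~\ref{T:infl_smash_func} to $p_1,p_2$, and invoking injectivity of $\infl_{X''}$ via Corollary~\ref{C:tensor_inflation}) you implicitly require $\DQCOH(X'\times_X X')$ to be a rigidly \emph{compactly generated} tensor triangulated category. This is not part of the hypotheses and does not follow from $X$ and $X'$ satisfying the Thomason condition: compact generation of $\DQCOH$ is not preserved by general base change, even along a concentrated, faithfully flat morphism of finite presentation. Your remark that ``no Thomason hypothesis on $X''$ is needed'' addresses only the compact-support half of the Thomason condition; the compact-generation half is precisely the part you cannot dispense with and cannot verify for $X'\times_X X'$.

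The paper sidesteps this by replacing $X'\times_X X'$ with a smooth affine cover $p\colon X''\to X'\times_X X'$ and working with the two composite projections $r,s\colon X''\to X'$. Since $X''$ is affine, compact generation, the Thomason condition, and injectivity of $\infl_{X''}$ are all automatic. The only extra observation needed is that the equalizer $\Tho(X)\to \Tho(X')\rightrightarrows\Tho(X'')$ is still exact, because $p$ is faithfully flat of finite presentation and hence $p^{-1}\colon\Tho(X'\times_X X')\to\Tho(X'')$ is injective. With that substitution, your argument becomes correct and is essentially an unwound version of the paper's diagram chase: push $\mathscript{S}$ to $\mathscript{S}'$ on $X'$, invert $\infl_{X'}$ using the telescoping hypothesis, pass to the Thomason subset $E'$, check its two pullbacks to $X''$ agree by injectivity of $\infl_{X''}$ together with the $\varphi$-square of Lemma~\ref{L:more_thick_commuting}, descend $E'$ to $E\in\Tho(X)$ by Lemma~\ref{L:ff-descent-of-Thomason}, and finally conclude $\infl_X(\mathcal{I}_X(E))=\mathscript{S}$ by conservativity of $\LDERF f^*$ and Theorem~\ref{T:idempotent_injective}.
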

\begin{proof}
  Let $p\colon X'' \to X'\times_X X'$ be a smooth surjection, where $X''$ is an affine scheme.
  Recall that $X''$ also satisfies the Thomason condition. Let 
  $r$, $s \colon X'' \to X'$ be the two projections. Then there 
  is an induced commutative diagram:
  \[
  \xymatrix@C6pc{
    \Tho(X) \ar[r]^{\mathcal{I}_X}_{\simeq} \ar[d]_{f^{-1}}
    & \thick(X) \ar[r]^{\infl_X} \ar[d]_{\thick([\LDERF \QCPBK{f}]^c)}
    & \smashing(X) \ar[d]_{\smashing(\LDERF \QCPBK{f})} \\
    \Tho(X') \ar[r]^{\mathcal{I}_{X'}}_{\simeq} \ar@<-.5ex>[d]_{r^{-1}} \ar@<.5ex>[d]^{s^{-1}}
    & \thick(X') \ar[r]^{\infl_{X'}} \ar@<-.5ex>[d]_{\thick([\LDERF \QCPBK{r}]^c)} \ar@<.5ex>[d]^{\thick([\LDERF \QCPBK{s}]^c)}
    & \smashing(X') \ar@<-.5ex>[d]_{\smashing(\LDERF \QCPBK{r})} \ar@<.5ex>[d]^{\smashing(\LDERF \QCPBK{s})} \\
    \Tho(X'') \ar[r]^{\mathcal{I}_{X''}}_{\simeq}
    & \thick(X'') \ar[r]^{\infl_{X''}}
    & \smashing(X''). }
  \]
  Note that since $f$, $r$ and $s$ are concentrated, the middle vertical maps are well-defined
  (Example~\ref{E:conc_pbk}) and the squares to the right commute (Theorem \ref{T:infl_smash_func}).
  By Theorem \ref{T:classification_thick}, the maps 
  $\mathcal{I}_X$, $\mathcal{I}_{X'}$ and $\mathcal{I}_{X''}$ are bijective.

  It remains to prove that $\infl_X$ is bijective. By Corollary 
  \ref{C:tensor_inflation}, we already know that $\infl_X$ is injective.
  Since $X'$ is telescoping, $\infl_{X'}$ is bijective, and
  since $X''$ is affine, $\infl_{X''}$ is injective
  (Corollary \ref{C:tensor_inflation}).
  Since $\Tho(X'\times_X X') \to \Tho(X'')$ is injective, 
  the leftmost sequence is exact (Lemma \ref{L:ff-descent-of-Thomason}).
  Moreover, since $\LDERF\QCPBK{f}$ is conservative,
  $\smashing(\LDERF \QCPBK{f})$ is injective (Theorem \ref{T:idempotent_injective}). 
A routine diagram 
  chase proves the result.
\end{proof}
We can now prove the main results of the article.
\begin{proof}[Proof of Theorem \ref{main:thick_stack}]
  The maps $\infl_X$ and $\cont_X$ exist by Corollary~\ref{C:tensor_inflation}.
  If $X$ has the Thomason property, then $\mathcal{I}_X$ and $\varphi_X$ are
  inverses by Theorem \ref{T:classification_thick}. Let $X'\to X$ be a presentation
  with $X'$ affine. If $X$ is noetherian, then $X'$ is
  telescoping by the result of Hopkins and Neeman~\cite{MR1174255}. It follows
  that $X$ is telescoping, that is, $\infl_X$ and $\cont_X$ are inverses, by
  Theorem~\ref{T:smashing_local}.
\end{proof}
\begin{proof}[Proof of Theorem \ref{main:equivariant_char0}]
  Under either assumption, the stack $[V/G]$ satisfies the Thomason condition
  by~\cite[Cor.~9.2]{perfect_complexes_stacks}.
  In particular, $\DQCOH([V/G])$ is compactly generated and thus
  $\DCAT(\QCOH^G(V)) \simeq \DQCOH([V/G])$~\cite[Thm.~1.2]{hallj_neeman_dary_no_compacts}.
  Also $BG$, and hence
  $[V/G]$ is concentrated~\cite[Thm.~B]{hallj_dary_alg_groups_classifying}
  so $\PERF^G(V) \simeq \DQCOH([V/G])^c$. We conclude by
  Theorem \ref{main:thick_stack}.
\end{proof}
\bibliography{references}
\bibliographystyle{bibstyle}
\end{document}